\documentclass[a4paper,12pt]{article}

\usepackage[left=2cm,right=2cm, top=2cm,bottom=2cm,bindingoffset=0cm]{geometry}

\usepackage{verbatim}
\usepackage{amsmath}
\usepackage{amsthm}
\usepackage{amssymb}
\usepackage{delarray}
\usepackage{cite}
\usepackage{hyperref}

\newcommand{\al}{\alpha}

\newcommand{\ga}{\gamma}
\newcommand{\de}{\delta}

\newcommand{\vv}{\varphi}
\newcommand{\iy}{\infty}

\newtheorem{thm}{Theorem}
\newtheorem{lem}{Lemma}

\sloppy \allowdisplaybreaks

\begin{document}

\begin{center}
{\large\bf
An inverse problem for the quadratic pencil of non-self-adjoint 

matrix operators on the half-line}
\\[0.2cm]
{\bf Natalia Bondarenko, Gerhard Freiling} \\[0.2cm]
\end{center}

\vspace{0.5cm}

{\bf Abstract.} We consider a pencil of non-self-adjoint matrix Sturm-Liouville
operators on the half line and study the inverse problem of constructing this pencil 
by its Weyl matrix. A uniqueness theorem is proved, and a constructive algorithm for the solution
is obtained.

\medskip

{\bf Keywords.} Matrix quadratic differential pencils, Weyl matrix,
inverse spectral problems, method of spectral mappings. 

\vspace{1cm}

{\large \bf 1. Introduction} \\

In this paper, we consider the pencil $L = L(\ell_{\rho}, U_{\rho})$ given by the differential expression
\begin{equation} \label{eqv}
    \ell_{\rho}(Y) := Y'' + (\rho^2 I + 2 i \rho Q_1(x) + Q_0(x)) Y, \quad x > 0, 
\end{equation}
with the initial condition
\begin{equation} \label{BC}
    U_{\rho}(Y) := Y'(0) + (i \rho h_1 + h_0) Y(0) = 0.
\end{equation}
Here $Y(x) = [y_k(x)]_{k = \overline{1, m}}$ is a column vector, $\rho$ is the spectral parameter,
$I$ is the $m \times m$ unit matrix,
$Q_s(x) = [Q_{s, jk}(x)]_{j,k = \overline{1, m}}$ are $m \times m$ matrix-functions,
$h_s = [h_{s, j k}]_{j, k = \overline{1, m}}$, 
where $h_{s, j k}$ are complex numbers.

We assume that $\det (I \pm h_1) \ne 0$.
This condition excludes problems of Regge type (see \cite{Yur84}) from consideration,
as they require a separate investigation.  

Differential equations with nonlinear dependence on the spectral parameter,
or with so-called ``energy--dependent'' coefficients, frequently appear in
mathematics and applications (see \cite{Kel71, KS83, Shkal83, Markus86, Yur97} and references therein).
The pencil $L$ is a natural generalization of the scalar pencil ($m = 1$) of Sturm-Liouville operators.
Inverse spectral problems consist in recovering operators from their spectral characteristics.
In the scalar case, inverse problems for quadratic pencils were studied
in the works \cite{JJ72, JJ76, Yur00} (half-line), \cite{MG86, MG89, Kam08} (full line) and 
\cite{GG81, BU06, BU12, HP12, Pron12} (finite interval).

The goal of this paper is to present a solution of the inverse problem for the matrix 
differential pencil $L$. Note that we consider the pencil in the most general 
non-self-adjoint case.
We apply and extend the ideas, developed for the matrix Sturm--Liouville equation
(see \cite{AM60, Yur06, FY07, CK09, MT10, Bond11, Bond12}), to the problem with 
nonlinear dependence of the spectral parameter.

Now we provide some notations and definitions necessary for formulating the inverse problem.
We use the notation $\mathcal{A}(\mathcal{I}; \mathbb{C}^{m \times m})$ for a class of the matrix functions
$F(x) = [f_{jk}(x)]_{k = \overline{1, m}}$ with entries $f_{jk}(x)$ belonging to the class $\mathcal{A}(\mathcal{I})$
of scalar functions. The symbol $\mathcal{I}$ stands for an interval or a segment. 
We say that the pencil $L$ belongs to the class $\mathcal{V}$, if
the entries of $Q_1(x)$ are absolutely continuous on $[0, T]$ for each $T > 0$ and 
$Q_0(x), Q_1(x), Q'_1(x) \in L((0, \infty); \mathbb{C}^{m \times m})$. Below we always consider
pencils from $\mathcal{V}$.

Denote $\Pi_{\pm} = \{ \rho \colon \pm \mbox{Im}\, \rho > 0\}$.
Let $\Phi(x, \rho) = [\Phi_{jk}(x, \rho)]_{j, k = \overline{1, m}}$ 
be the matrix solution of equation $\ell_{\rho}(Y) = 0$ satisfying the conditions
$U_{\rho}(\Phi) = I$, 
$\Phi(x, \rho) = O(\exp(\pm i \rho x))$ as $x \to \iy$ for $\rho \in \Pi_{\pm}$.
We call $\Phi(x, \rho)$ the
\textit{Weyl solution} for the pencil $L$. Put $M(\rho) := \Phi(0, \rho)$. 
The matrix $M(\rho) = [M_{jk}(\rho)]_{j,k = \overline{1, m}}$ is called
the \textit{Weyl matrix} for $L$. The notion of the Weyl matrix
is a generalization of the notion of the Weyl function ($m$-function)
for the scalar case (see \cite{Mar77, FY01}) and the notion of the 
Weyl matrix for the matrix Sturm--Liouville operator (see \cite{FY07}).

We study the following

\medskip

{\bf Inverse Problem 1.} 
{\it Given a Weyl matrix $M(\rho)$, find the coefficients of the pencil $L$.}

\medskip

We prove a uniqueness theorem for Inverse Problem 1 and provide a constructive procedure
for recovering the pencil $L$ by its Weyl matrix. In our research, we generalize the method of
spectral mappings \cite{FY01, Yur02}. The main idea of this method is the transformation of a nonlinear
inverse spectral problem to a linear equation in a proper Banach space. The important
role is played by the contour integration in the complex plane of the spectral parameter and
by the residue theorem. In contrast to the case of the matrix Sturm-Liouville operator \cite{FY07}, 
the main equation for the pencil is not always uniquely solvable. Roughly speaking, its solvability depends 
on the coefficient $Q_1$ which is not known a priori. Therefore we build the solution ``step by step'' 
in the intervals where we can solve the main equation
(see Algorithm 2), and prove that the number of steps is finite. 

The paper is organized as follows. In Section 2, we obtain some preliminary results, and in Section 3, we prove
the uniqueness theorem for Inverse Problem 1. In Section 4, the main equation is derived and conditions of
its unique solvability are studied. In Section 5, we give a constructive procedure of recovering the pencil $L$
by the Weyl matrix. The paper also contains Appendix, where we provide a detailed construction of Jost- and 
Birkhoff-type solutions for the equation $\ell_{\rho}(Y) = 0$. 

\bigskip

{\large \bf 2. Preliminaries} 

\bigskip

In this section, we construct a 
special fundamental system of solutions for the equation $\ell_{\rho}(Y) = 0$. 
Then we investigate properties of the Weyl matrix and obtain some other preliminary results.

\medskip

\begin{thm} \label{thm:FSS}
For each $\al > 0$, there exists $\rho_{\al} > 0$ such that for $|\rho| > \rho_{\al}$, $x \ge 0$
the equation $\ell_{\rho} (Y) = 0$
has a fundamental system of solutions $E_{\pm}(x, \rho)$ satisfying the following conditions:

$(i_1)$ the matrix functions $E^{(\nu)}_{\pm}(x, \rho)$, $\nu = 0, 1$, are continuous for
$|\rho| > \rho_{\al}$, $x \ge 0$ in each half-plane $\overline{\Pi_{\pm}}$;

$(i_2)$ for each fixed $x \ge 0$ the matrix functions $E^{(\nu)}_{\pm}(x, \rho)$, $\nu = 0, 1$
are analytic for $|\rho| \ge \rho_{\al}$ in each half-plane $\Pi_{\pm}$;

$(i_3)$ as $|\rho| \to \iy$, $\rho \in \overline{\Pi_{\pm}}$, uniformly with respect to $x \ge \al$ 
we have
$$
  	E_{\pm}(x, \rho) = \exp(\pm i \rho x) \left( P_{\mp}(x) + \frac{T_{\mp}(x)}{\rho} + o(\rho^{-1}) \right),
$$
$$
 	E'_{\pm}(x, \rho) = \pm \exp(\pm i \rho x) \left( (i \rho I - Q_1(x)) P_{\mp} (x) \pm i T_{\mp}(x) + o(1) \right),
$$
where the matrix functions $P_{\pm}(x)$ and $T_{\pm}(x)$ are the solutions of the Cauchy problems
\begin{equation} \label{cauchyP}
 	P'_{\pm}(x) = \pm Q_1(x) P_{\pm}(x), \quad P_{\pm}(0) = I,
\end{equation}
\begin{equation} \label{cauchyT}
 	T'_{\pm}(x) = \pm Q_1(x) T_{\pm}(x) + \frac{1}{2 i} (Q'_1(x) \pm Q_1^2(x) \pm Q_0(x)) P_{\pm}(x), 
 	\quad \lim_{x \to \iy} T_{\pm}(x) = 0;
\end{equation}

$(i_4)$ as $x \to \iy$, for each fixed $\rho \in \Pi_{\pm}$, $|\rho| > \rho_{\al}$, we have
$$
 	E_{\pm}^{(\nu)}(x, \rho) = (\pm i \rho)^{\nu} \exp(\pm i \rho x) (P_{\mp}(x) + o(1)).
$$

Note that $\rho_{\al} \to 0$ as $\al \to \iy$.
 
\end{thm}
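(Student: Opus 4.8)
The plan is to realize $E_\pm$ as Jost-type solutions, normalized by their behaviour at $+\iy$, and to produce them by successive approximations from a Volterra integral equation. First I would check that \eqref{cauchyP}, \eqref{cauchyT} are solvable in $\mathcal V$: the linear systems $P'_\pm = \pm Q_1 P_\pm$, $P_\pm(0) = I$, have unique locally absolutely continuous solutions, and since $Q_1 \in L((0,\iy))$ the $P_\pm(x)$ are bounded, invertible, and have limits $P_\pm(\iy)$; then $T_\pm$ is obtained by integrating its equation from $x$ to $\iy$, the condition $\lim_{x\to\iy}T_\pm = 0$ fixing the constant and the integral converging because $Q_0, Q'_1, Q_1^2 \in L((0,\iy))$. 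This is precisely where the absolute continuity of $Q_1$ and $Q'_1 \in L$ are used.

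The crucial device is a joint phase--amplitude substitution that eliminates the order-$\rho$ coefficient $2i\rho Q_1$. Setting $E_\pm(x,\rho) = \exp(\pm i\rho x)P_\mp(x)U_\pm(x,\rho)$ and using $P'_\mp = \mp Q_1 P_\mp$, a direct computation turns $\ell_\rho(E_\pm) = 0$ into
\[ U''_\pm \pm 2i\rho\,U'_\pm \mp 2\widehat Q_1\,U'_\pm + \widehat R\,U_\pm = 0, \quad \widehat Q_1 := P_\mp^{-1}Q_1 P_\mp, \ \ \widehat R := P_\mp^{-1}(\mp Q'_1 + Q_1^2 + Q_0)P_\mp, \]
whose distinctive feature is that $U_\pm$ itself is multiplied only by $\widehat R \in L((0,\iy))$: the term of order $\rho$ has been absorbed into the factor $\exp(\pm i\rho x)P_\mp$. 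Multiplying by the integrating factor $\exp(\pm 2i\rho x)$ and integrating from $x$ to $\iy$ gives a Volterra equation for $U'_\pm$ --- valid first for $\rho \in \Pi_\pm$ and then on the closure $\overline{\Pi_\pm}$ by continuity --- and one more integration an equation for $U_\pm$ with $U_\pm(\iy) = I$; its kernel involves only $\exp(\pm 2i\rho(t-x))$, which for $t \ge x$, $\rho \in \overline{\Pi_\pm}$ is bounded by $1$, so the whole kernel is dominated by $C|\rho|^{-1}$ times an $L^1(x,\iy)$ function uniformly in the half-plane.

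The successive approximations converge, the $n$-th term being $O\big((C|\rho|^{-1}\!\int_x^\iy(|\widehat Q_1| + |\widehat R|)\,dt)^n/n!\big)$, and on $[\al,\iy)$ the relevant integral is the tail $\int_\al^\iy$, which tends to $0$ as $\al \to \iy$; hence the series converges uniformly once $|\rho| > \rho_\al$ with $\rho_\al \to 0$, and the solution is continued to $[0,\al)$ as the solution of the Cauchy problem for $\ell_\rho(Y) = 0$. Reading the series off yields $(i_1)$ and $(i_2)$, the $\rho$-dependence being through entire exponentials only. For $(i_3)$ one truncates: the zeroth term gives $U_\pm = I$, so $E_\pm = \exp(\pm i\rho x)P_\mp$ to leading order, while the first iterate gives $U_\pm = I + \rho^{-1}U_{1,\pm} + o(\rho^{-1})$ with $U_{1,\pm}(x) = \pm\frac{1}{2i}\int_x^\iy \widehat R(t)\,dt$, the oscillatory part of the integral being $o(1)$ by the Riemann--Lebesgue lemma (so that only $\widehat R \in L$ is needed, not a derivative of it); a short check shows $T_\mp = P_\mp U_{1,\pm}$ solves exactly \eqref{cauchyT}. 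Differentiating $E_\pm = \exp(\pm i\rho x)P_\mp U_\pm$ directly, with $U_\pm = I + O(\rho^{-1})$ and $U'_\pm = o(1)$, produces the expansion of $E'_\pm$ with the $\pm i T_\mp$ term and $o(1)$ error; letting $x \to \iy$ for fixed $\rho$ (the tail vanishing, $U_\pm \to I$) gives $(i_4)$. Finally $E_+, E_-$ form a fundamental system because their leading asymptotics are independent: the Wronskian of the associated $2m \times 2m$ system behaves like $(-2i\rho)^m \det P_+(x)\det P_-(x) = (-2i\rho)^m$ (using $\det P_+ \det P_- \equiv 1$), nonzero for $\rho \ne 0$.

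The heart of the matter is the order-$\rho$ coefficient $2i\rho Q_1$. It cannot be removed by any transformation of $Y$ alone, nor by the free Green's function of $\partial_x^2 + \rho^2$, since that would reinstate a $2i\rho Q_1$ coupling inside the iteration kernel and wreck convergence; only the combined eikonal-type substitution $\exp(\pm i\rho x)P_\mp$, which changes both phase and amplitude, pushes it out of the $U_\pm$-equation. Once this is done, the remaining genuinely delicate points are the uniformity of the Riemann--Lebesgue remainder in $(i_3)$ over $x \ge \al$, and the bookkeeping that lets the convergence threshold be tracked by the tail integral to give $\rho_\al \to 0$; everything else is the standard Birkhoff/Jost successive-approximation machinery, which I would relegate (as the authors do) to an appendix.
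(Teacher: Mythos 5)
Your phase--amplitude substitution $E_{\pm}=\exp(\pm i\rho x)P_{\mp}(x)U_{\pm}$ is sound and is essentially the paper's own device: the appendix takes $E^0_{\pm}(x,\rho)=\exp(\pm i\rho x)P_{\mp}(x)$ as the unperturbed fundamental system and perturbs around it by variation of parameters, and your identification of the first-order coefficient with $T_{\mp}$ via \eqref{cauchyT} checks out. But there is a genuine gap: your construction produces, in each closed half-plane, only the \emph{recessive} (Jost-type) solution --- $E_{+}$ for $\rho\in\overline{\Pi_{+}}$ and $E_{-}$ for $\rho\in\overline{\Pi_{-}}$. The theorem demands both $E_{+}$ and $E_{-}$ on each $\overline{\Pi_{\pm}}$: for $\rho\in\Pi_{+}$ one also needs the dominant, Birkhoff-type solution $E_{-}$, and there your Volterra-from-infinity scheme breaks down. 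The kernel $\exp(-2i\rho(t-x))$ is unbounded for $t\ge x$ when $\mbox{Im}\,\rho>0$, the integrals $\int_x^{\iy}$ diverge, and the normalization $U_{-}(\iy)=I$ is meaningless; your ``bounded by $1$'' observation holds only in the wrong half-plane. Without this second solution the expansion $\vv=E_{+}C_{+}+E_{-}C_{-}$ used in Lemma~\ref{lem:asympt} (hence the Weyl-matrix asymptotics) is unavailable off the real axis, and your closing Wronskian/fundamental-system argument has nothing to apply to for nonreal $\rho$.

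The paper handles this in Step~3 by a mixed integral equation on $[\al,\iy)$: the part with the decaying kernel $\exp(i\rho(t-x))$ is integrated over $(x,\iy)$, but the remaining part over $(\al,x)$, so the equation is not of pure Volterra type, the successive approximations converge only geometrically (no $1/k!$ gain: $\|\xi_k^{(\nu)}\|\le 2^k\mathcal{F}(x)\mathcal{H}^k(\al)\,|\rho|^{\nu-k}$), and one must impose $|\rho|\ge\rho_{\al}\ge 4\mathcal{H}(\al)$. This --- not the tail of your $L^1$ integral --- is the true origin of the threshold $\rho_{\al}$ and of the remark $\rho_{\al}\to 0$ as $\al\to\iy$; in your pure Volterra setting the factorial convergence works for every $\rho\ne 0$ and no threshold would be needed at all, which should have been a warning sign that the hard half of the statement had been bypassed. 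Relatedly, $(i_4)$ for the Birkhoff-type solution at fixed $\rho\in\Pi_{+}$ requires the extra argument splitting $\int_{\al}^{x}$ at $t=x/2$ and using $\exp(-|\mbox{Im}\,\rho|\,x)\to 0$, which has no counterpart in your sketch. A smaller inaccuracy: in your iteration the coefficient $\mp 2\widehat{Q}_1$ multiplies $U'_{\pm}$ with no $|\rho|^{-1}$ gain, so the $n$-th term is not $O\bigl((C|\rho|^{-1}\int(\cdot))^n/n!\bigr)$ in both components simultaneously; the correct bookkeeping (compare \eqref{estZk}) yields the factor $|\rho|^{\nu-k}$ for the $\nu$-th derivative of the $k$-th iterate.
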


\medskip

{\it Remark.} In the scalar case $(m = 1)$, we have
$$
 	P_{\pm}(x) = P^*_{\pm}(x) = \exp \left\{ \pm \int_0^x Q_1(t) \, dt \right\}.
$$

\medskip

In general, the proof of Theorem~\ref{thm:FSS} is based on the ideas from \cite{FY01}[Section 2.1].
However, it contains many technical difficulties. For convenience of the reader, we provide the proof in Appendix.

Now we study some important properties of
the coefficient matrices $P_{\pm}(x)$. 
Analogously to $P_{\pm}(x)$, we introduce the matrix functions $P^*_{\pm}(x)$ as the solutions
of the following Cauchy problems
\begin{equation} \label{cauchyP*}
 	{P^*}'_{\pm}(x) = \pm P^*_{\pm}(x) Q_1(x), \quad P^*_{\pm}(0) = I.	
\end{equation}

Below we mean
by $\| . \|$ the following matrix norm: $\| A \| = \max_{j = \overline{1, m}} \sum_{k = 1}^m |a_{ij}|$,
$A = [a_{jk}]_{j, k = \overline{1, m}}$.

\medskip

\begin{lem} \label{lem:P}
The following relations hold
\begin{equation} \label{relP}
 	P_{+}(x) P_{-}^*(x) = P_{-}^*(x) P_{+}(x) = I, \quad P_{-}(x) P_{+}^*(x) = P_{+}^*(x) P_{-}(x) = I, 
\end{equation}
\begin{equation} \label{estP}
	\| P_{\pm}(x) \|, \, \| P^{-1}_{\pm}(x) \| \le \exp\left( \int_0^x \| Q_1(t) \| \, dt \right) < K < \infty,
\end{equation}
for all $x > 0$. Here $K$ is some constant depending on $Q_1$.

\end{lem}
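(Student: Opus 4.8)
The plan is to establish the multiplicative identities \eqref{relP} by an ODE-uniqueness argument and then to derive the bounds \eqref{estP} via Gronwall's inequality. For the identities, I would identify the inverses $P_{\pm}^{-1}(x)$ with the starred functions. Differentiating $P_+^{-1}(x) P_+(x) = I$ and using the relation $P'_+ = Q_1 P_+$ from \eqref{cauchyP} gives
$$
  (P_+^{-1})'(x) = - P_+^{-1}(x) P'_+(x) P_+^{-1}(x) = - P_+^{-1}(x) Q_1(x),
$$
together with $P_+^{-1}(0) = I$. This is exactly the Cauchy problem \eqref{cauchyP*} satisfied by $P_-^*(x)$; since $Q_1$ is locally integrable, the solution of such a linear system is unique, whence $P_+^{-1}(x) = P_-^*(x)$, which yields the first pair of identities in \eqref{relP}. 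The same computation for $P_-$, now using $P'_- = - Q_1 P_-$, shows that $P_-^{-1}$ solves $(P_-^{-1})' = P_-^{-1} Q_1$, $P_-^{-1}(0) = I$, which is the defining problem for $P_+^*$; hence $P_-^{-1}(x) = P_+^*(x)$, giving the second pair.

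For the estimates, I would pass to the equivalent integral equations. From \eqref{cauchyP} we have $P_{\pm}(x) = I \pm \int_0^x Q_1(t) P_{\pm}(t)\, dt$, so, using submultiplicativity of $\| \cdot \|$, we obtain $\| P_{\pm}(x) \| \le 1 + \int_0^x \| Q_1(t) \| \, \| P_{\pm}(t) \| \, dt$. Gronwall's inequality then gives $\| P_{\pm}(x) \| \le \exp\left( \int_0^x \| Q_1(t) \| \, dt \right)$. Applying the same argument to the integral forms of \eqref{cauchyP*}, namely $P_{\pm}^*(x) = I \pm \int_0^x P_{\pm}^*(t) Q_1(t)\, dt$, yields the identical bound for $\| P_{\pm}^*(x) \|$; combined with the identities $\| P_+^{-1}(x) \| = \| P_-^*(x) \|$ and $\| P_-^{-1}(x) \| = \| P_+^*(x) \|$ just proved, this controls all four quantities by the same exponential. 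Finally, since $Q_1 \in L((0, \iy); \mathbb{C}^{m \times m})$ by the assumption $L \in \mathcal{V}$, the integral $\int_0^{\iy} \| Q_1(t) \| \, dt$ is finite, so the exponential is bounded above, uniformly in $x$, by the constant $K := \exp\left( \int_0^{\iy} \| Q_1(t) \| \, dt \right)$.

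I do not expect a serious obstacle here; the argument is routine once the inverses are matched with the starred functions. The only point requiring a little care is that the matrix norm in the statement is submultiplicative, which holds because $\| A \| = \max_j \sum_k |a_{jk}|$ is the operator norm induced by the $\ell^{\iy}$ vector norm, so that the integral inequalities leading to the Gronwall estimate are legitimate.
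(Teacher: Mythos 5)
Your proof is correct and rests on the same two pillars as the paper's proof --- identifying the starred functions with the inverses via the Cauchy problems \eqref{cauchyP}, \eqref{cauchyP*}, then Gronwall for \eqref{estP} --- but your route to \eqref{relP} differs in one substantive detail. The paper differentiates the \emph{product}: $(P_{-}^*(x)P_{+}(x))' = -P_{-}^* Q_1 P_{+} + P_{-}^* Q_1 P_{+} = 0$, so $P_{-}^* P_{+} \equiv P_{-}^*(0)P_{+}(0) = I$, which \emph{establishes} invertibility of $P_{+}(x)$ as a conclusion. You instead differentiate $P_{+}^{-1}(x)P_{+}(x) = I$, which presupposes that $P_{+}(x)$ is invertible for every $x$ and that $x \mapsto P_{+}^{-1}(x)$ is absolutely continuous --- facts not yet available at that point in your argument, and indeed part of what the lemma asserts (the bound on $\| P_{\pm}^{-1}(x) \|$). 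This is not a fatal gap: invertibility of the fundamental solution follows from Liouville's formula, $\det P_{\pm}(x) = \exp\left( \pm \int_0^x \operatorname{tr} Q_1(t) \, dt \right) \ne 0$, and differentiability of the inverse then follows from Cramer's rule; but you should say so explicitly, or simply adopt the paper's product computation, which needs no such input and yields the identities directly (a one-sided inverse of a square matrix being automatically two-sided). Your Gronwall argument --- the integral-equation form, the submultiplicativity remark about the max-row-sum norm, and the use of $Q_1 \in L((0, \infty); \mathbb{C}^{m \times m})$ from the class $\mathcal{V}$ to obtain the uniform constant $K$ --- matches the paper's proof, which merely says the estimates are ``easily obtained by Gronwall's lemma,'' and correctly fills in those details.
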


\begin{proof}
Using \eqref{cauchyP} and \eqref{cauchyP*}, we get
$$
 	(P_{-}^*(x) P_{+}(x))' = {P_{-}^*}'(x) P_{+}(x) + P_{-}^{*}(x) P_{+}'(x) =
 	- P_{-}^*(x) Q_1(x)  P_{+}(x) + P_{-}^*(x) Q_1(x)  P_{+}(x) = 0.
$$ 
Hence $P_{-}^*(x) P_{+}(x)$ does not depend on $x$, so $P_{-}^*(x) P_{+}(x) = P_{-}^*(0) P_{+}(0) = I$.
The other relations of \eqref{relP} can be proved similarly.

The estimates \eqref{estP} can be easily obtained from \eqref{cauchyP} and \eqref{cauchyP*} by 
Gronwall's lemma.
\end{proof}

\medskip

Let $S(x, \rho) = [S_{jk}(x, \rho)]_{j, k = \overline{1, m}}$ and
$\vv(x, \rho) = [\vv_{jk}(x, \rho)]_{j, k = \overline{1, m}}$
be the matrix solutions of equation $\ell_{\rho}(Y) = 0$
under the initial conditions
$$
 	S(0, \rho) = U_{\rho}(\vv) = 0, \quad S'(0, \rho) = \vv(0, \rho) = I.
$$

One can easily show that
\begin{equation} \label{PhiexpE}
 	\Phi(x, \rho) = E_{\pm}(x, \rho) (U_{\rho}(E_{\pm}))^{-1}, \quad \rho \in \Pi_{\pm},
\end{equation}
\begin{equation} \label{Phiexp}
 	\Phi(x, \rho) = S(x, \rho) + \vv(x, \rho) M(\rho).
\end{equation}

%Using Theorem~\ref{thm:FSS}, we obtain the following results.

\begin{lem} \label{lem:asympt}
As $|\rho| \to \iy$, $\rho \in \overline{\Pi_{\pm}}$, the following relations hold uniformly with
respect to $x \ge 0$:
\begin{multline} \label{asymptphi}
 	\vv^{(\nu)}(x, \rho) = \frac{(i \rho)^{\nu}}{2} \exp(i \rho x) (P_{-}(x) (I - h_1) + O(\rho^{-1})) + \\ + 
 	\frac{(- i \rho)^{\nu}}{2} \exp(- i \rho x) (P_{+}(x) (I + h_1) + O(\rho^{-1})), \quad
 	\nu = 0, 1,
\end{multline}
$$
 	\Phi^{(\nu)}(x, \rho) = (\pm i \rho)^{\nu - 1} (P_{\mp}(x) (I \pm h_1)^{-1} + O(\rho^{-1})),
$$
\end{lem}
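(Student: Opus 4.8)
The plan is to express both $\vv(x,\rho)$ and $\Phi(x,\rho)$ through the Birkhoff-type fundamental system $E_\pm(x,\rho)$ of Theorem~\ref{thm:FSS} and to read off the asymptotics from $(i_3)$. Since $\{E_+,E_-\}$ is a fundamental system of $\ell_\rho(Y)=0$ for $|\rho|>\rho_\al$, I would write $\vv(x,\rho)=E_+(x,\rho)A(\rho)+E_-(x,\rho)B(\rho)$ with matrix coefficients $A(\rho),B(\rho)$ fixed by the initial data. Recall that $\vv(0,\rho)=I$ and, from $U_\rho(\vv)=0$, that $\vv'(0,\rho)=-(i\rho h_1+h_0)$. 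Evaluating $(i_3)$ at $x=0$ (where $P_\pm(0)=I$ by \eqref{cauchyP}) gives $E_\pm(0,\rho)=I+O(\rho^{-1})$ and $E'_\pm(0,\rho)=\pm i\rho I+O(1)$, so $A,B$ solve the system $E_+(0,\rho)A+E_-(0,\rho)B=I$ and $E'_+(0,\rho)A+E'_-(0,\rho)B=-(i\rho h_1+h_0)$. The leading coefficient matrix of this block system is nonsingular for large $|\rho|$, and solving to leading order yields $A(\rho)=\tfrac12(I-h_1)+O(\rho^{-1})$, $B(\rho)=\tfrac12(I+h_1)+O(\rho^{-1})$.

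Substituting these back and inserting the $(i_3)$ asymptotics of $E_\pm(x,\rho)$ for general $x$ then produces \eqref{asymptphi}: the $E_+$ term contributes $\tfrac12\exp(i\rho x)(P_-(x)(I-h_1)+O(\rho^{-1}))$ and the $E_-$ term contributes $\tfrac12\exp(-i\rho x)(P_+(x)(I+h_1)+O(\rho^{-1}))$, where I would invoke the boundedness of $P_\pm$ from \eqref{estP} to absorb products of $O(\rho^{-1})$ factors. The case $\nu=1$ follows identically by differentiating $\vv=E_+A+E_-B$ and using the asymptotics of $E'_\pm$, which replace the exponential prefactors by $\pm i\rho\exp(\pm i\rho x)(P_\mp+O(\rho^{-1}))$ and hence reproduce the factors $(\pm i\rho)^\nu$.

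For $\Phi$ I would start from \eqref{PhiexpE}, i.e. $\Phi(x,\rho)=E_\pm(x,\rho)(U_\rho(E_\pm))^{-1}$ for $\rho\in\Pi_\pm$. Computing $U_\rho(E_\pm)=E'_\pm(0,\rho)+(i\rho h_1+h_0)E_\pm(0,\rho)$ from the $x=0$ asymptotics gives $U_\rho(E_\pm)=i\rho\big((\pm I+h_1)+O(\rho^{-1})\big)$. Here the standing assumption $\det(I\pm h_1)\ne0$ is exactly what is needed: it makes $U_\rho(E_\pm)$ invertible for large $|\rho|$ and gives $(U_\rho(E_\pm))^{-1}=(\pm i\rho)^{-1}\big((I\pm h_1)^{-1}+O(\rho^{-1})\big)$. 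Multiplying by the $(i_3)$ expansion of $E_\pm^{(\nu)}(x,\rho)$ then yields the asymptotics of $\Phi^{(\nu)}$, with leading term $(\pm i\rho)^{\nu-1}\exp(\pm i\rho x)(P_\mp(x)(I\pm h_1)^{-1}+O(\rho^{-1}))$.

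The routine part is the algebra of inverting the block systems and tracking the $O(\rho^{-1})$ remainders through the products and the inversion, where a Neumann-series estimate justifies $(X+O(\rho^{-1}))^{-1}=X^{-1}+O(\rho^{-1})$ for invertible $X$. The genuine technical obstacle will be uniformity down to $x=0$: the expansions in $(i_3)$ are only uniform on $x\ge\al$, while $\rho_\al$ grows as $\al\to0$. I would handle this by fixing an arbitrary $\al>0$ to get the asymptotics uniformly on $[\al,\iy)$ — the connection matrices $A,B$ and the inverse $(U_\rho(E_\pm))^{-1}$ depend only on the exact data at the single point $x=0$, so they are unaffected — and then controlling $\vv^{(\nu)}$ and $\Phi^{(\nu)}$ separately on the compact strip $[0,\al]$ directly from the Volterra integral equations for $\ell_\rho(Y)=0$ with the prescribed Cauchy data, matching the two estimates at $x=\al$.
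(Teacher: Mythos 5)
Your proposal is correct and takes essentially the same route as the paper: expand $\vv(x,\rho)=E_+(x,\rho)C_+(\rho)+E_-(x,\rho)C_-(\rho)$, determine $C_\pm(\rho)=\tfrac12(I\mp h_1)+O(\rho^{-1})$ from the data $\vv(0,\rho)=I$, $U_\rho(\vv)=0$ together with $(i_3)$, and obtain the $\Phi$-asymptotics by substituting $(i_3)$ into \eqref{PhiexpE}. Your two extra touches are actually improvements over the paper's terse proof: you correctly retain the factor $\exp(\pm i\rho x)$ in the $\Phi^{(\nu)}$ expansion (the lemma's statement drops it, evidently a misprint, since the proof of Lemma~\ref{lem:asymptP} uses it with the exponential), and you explicitly patch the uniformity on the strip $[0,\al]$ via the Volterra equation with matching at $x=\al$, a point the paper passes over in silence when it applies $(i_3)$ at $x=0$.
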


\begin{proof}
Expand the solution $\vv(x, \rho)$ by the fundamental system from Theorem~\ref{thm:FSS}:
$$
 	\vv^{(\nu)}(x, \rho) = E^{(\nu)}_{+}(x, \rho) C_{+}(\rho) + E^{(\nu)}_{-}(x, \rho) C_{-}(\rho), \quad \nu = 0, 1.
$$
Find the asymptotics for the coefficients $C_{\pm}(\rho)$, using the initial conditions for $\vv(x, \rho)$
and $(i_3)$ of Theorem~\ref{thm:FSS}:
$ C_{\pm}(\rho) = \frac{1}{2} (I \mp h_1) + O(\rho^{-1})$, $|\rho| \to \infty$. Then we immediately obtain \eqref{asymptphi}.

In order to derive the asymptotic representation for $\Phi(x, \rho)$, substitute $(i_3)$ of Theorem~\ref{thm:FSS} into \eqref{PhiexpE}. 
\end{proof}

\begin{lem} \label{lem:asymptM}
The matrix function $M(\rho)$ is analytic in each half-plane $\Pi_{\pm}$ outside a countable
bounded set of poles $\Lambda'_{\pm}$, and it is continuous in $\overline{\Pi_{\pm}}$ 
outside a bounded set $\Lambda_{\pm}$. As $|\rho| \to \iy$ the following relation holds
$$
 	M(\rho) = (i \rho)^{-1} (h_1 \pm I)^{-1} + (i \rho)^{-2} (I \pm h_1)^{-1} (\mp Q_1(0) - h_0) (I \pm h_1)^{-1} + 
 	\frac{\kappa(\rho)}{\rho^2}, \quad \kappa(\rho) = o(1), 
$$
Moreover,
$\kappa \in L_2((0, +\infty); \mathbb{C}^{m \times m})$, 
if $Q_s^{(s)} \in L_2((0, \infty); \mathbb{C}^{m \times m})$, $s = 0, 1$.
\end{lem}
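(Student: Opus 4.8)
The plan is to read off $M(\rho)$ from the Weyl solution via \eqref{PhiexpE}. Setting $x = 0$ in \eqref{PhiexpE} and recalling $M(\rho) = \Phi(0, \rho)$ gives
$$ M(\rho) = E_{\pm}(0, \rho)\, \bigl(U_{\rho}(E_{\pm})\bigr)^{-1}, \qquad \rho \in \Pi_{\pm}, $$
where $U_{\rho}(E_{\pm}) = E'_{\pm}(0, \rho) + (i\rho h_1 + h_0) E_{\pm}(0, \rho)$. By $(i_1)$ and $(i_2)$ of Theorem~\ref{thm:FSS}, the matrices $E_{\pm}(0,\rho)$, $E'_{\pm}(0,\rho)$, and hence $U_{\rho}(E_{\pm})$, are continuous in $\overline{\Pi_{\pm}}$ and analytic in $\Pi_{\pm}$ for $|\rho| > \rho_{\al}$; therefore $M(\rho)$ inherits these properties wherever $U_{\rho}(E_{\pm})$ is invertible. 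The exceptional points are the zeros of the scalar analytic function $\det U_{\rho}(E_{\pm})$, which are isolated and hence constitute at most countable sets $\Lambda'_{\pm}$ (poles) and $\Lambda_{\pm}$ (loss of continuity on the boundary). To see these sets are bounded I would invoke the leading asymptotics below: since $U_{\rho}(E_{\pm}) = i\rho (h_1 \pm I)\bigl(I + O(\rho^{-1})\bigr)$ and the hypothesis $\det(I \pm h_1) \ne 0$ makes $h_1 \pm I$ invertible, $U_{\rho}(E_{\pm})$ is invertible for all sufficiently large $|\rho|$, confining $\Lambda'_{\pm}$ and $\Lambda_{\pm}$ to a bounded region.

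For the asymptotic expansion I would substitute condition $(i_3)$ of Theorem~\ref{thm:FSS} at $x = 0$, using $P_{\mp}(0) = I$ from \eqref{cauchyP}:
$$ E_{\pm}(0, \rho) = I + \rho^{-1} T_{\mp}(0) + o(\rho^{-1}), \qquad E'_{\pm}(0, \rho) = \pm\bigl((i\rho I - Q_1(0)) \pm i T_{\mp}(0)\bigr) + o(1), $$
with $T_{\mp}(0)$ determined by \eqref{cauchyT}. Inserting these into $U_{\rho}(E_{\pm})$ and collecting powers of $\rho$ gives $U_{\rho}(E_{\pm}) = i\rho(h_1 \pm I) + C_{\pm} + o(1)$ with an explicit constant matrix $C_{\pm}$ assembled from $Q_1(0)$, $h_0$, $h_1$, and $T_{\mp}(0)$. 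Factoring out the invertible leading term, $U_{\rho}(E_{\pm}) = i\rho(h_1 \pm I)\bigl(I + (i\rho)^{-1}(h_1 \pm I)^{-1} C_{\pm} + o(\rho^{-1})\bigr)$, and inverting by a Neumann series yields $\bigl(U_{\rho}(E_{\pm})\bigr)^{-1}$ up to order $\rho^{-2}$. Multiplying on the left by the expansion of $E_{\pm}(0,\rho)$ and gathering terms produces the leading term $(i\rho)^{-1}(h_1 \pm I)^{-1}$ together with the $\rho^{-2}$ coefficient; a useful internal check is that the two occurrences of $T_{\mp}(0)$ cancel, so the $\rho^{-2}$ term depends only on $Q_1(0)$, $h_0$, and $(I \pm h_1)^{-1}$, as in the stated formula. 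The leading term alone also follows directly from Lemma~\ref{lem:asympt} taken at $x = 0$.

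The $L_2$-refinement of the remainder is the delicate point. The substitution above only yields $\kappa(\rho) = o(1)$; to conclude $\kappa \in L_2$ one must replace the $o(\cdot)$ errors in $(i_3)$ by remainders that, as functions of $\rho$, are square-integrable, and this sharpening is available precisely under the extra hypothesis $Q_s^{(s)} \in L_2$, $s = 0, 1$. I expect the main obstacle to lie in \emph{where} the asymptotics are controlled rather than in the algebra: condition $(i_3)$ is asserted uniformly only for $x \ge \al > 0$, whereas the computation of $M(\rho)$ requires $E_{\pm}$ and $E'_{\pm}$ evaluated at $x = 0$ with a remainder whose dependence on $\rho$ is sharp enough to be square-integrable. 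Supplying such estimates at $x = 0$ is exactly the role of the Birkhoff-type solutions and the refined error bounds constructed in the Appendix, so the careful step is to draw on that construction (rather than on the Jost-type asymptotics tailored to $x \to \iy$) and to propagate the remainder through the Neumann-series inversion.
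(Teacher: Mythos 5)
Your proposal follows essentially the same route as the paper's proof: the paper likewise writes $M(\rho) = E_{+}(0,\rho)\,(U(E_{+}))^{-1}$, confines the exceptional sets to the zeros of $\det U(E_{+}) = (i\rho)^m \det(I + h_1) + O(\rho^{m-1})$ using $\det(I \pm h_1) \ne 0$, expands via $(i_3)$ at $x = 0$ with the same cancellation of the $T_{-}(0)$ terms after inverting the leading factor, and settles the $L_2$ claim by observing that $\kappa(\rho)$ is governed by the integrals $J_2(x,\rho)$ and $J_2(a,x,\rho)$ from Steps 2 and 3 of the Appendix, which lie in $L_2$ under $Q_s^{(s)} \in L_2$. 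Your closing concern about $(i_3)$ holding only for $x \ge \al$ is resolved implicitly in the construction (for $|\rho|$ large one may take $x_* = 0$ in \eqref{Q*}, and Lemma~\ref{lem:asympt} is indeed stated uniformly for $x \ge 0$), so it does not create a genuine gap.
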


\begin{proof}
Let us prove the lemma for $\Pi_{+}$. The case of $\Pi_{-}$ can be considered analogously.
By Theorem~\ref{thm:FSS}, the matrix function $M(\rho) = E_{+}(0, \rho) (U(E_{+}))^{-1}$
is analytic in $\Pi_{+}$ except in 	the zeros of $\det U(E_{+})$. By virtue of Theorem~\ref{thm:FSS}
this determinant is analytic in $\Pi_{+}$ and 
$$
	\det U(E_{+}) = (i \rho)^m \det(I + h_1) + O(\rho^{m - 1}), \quad |\rho| \to \infty.
$$
Note that we assume $\det(I + h_1) \ne 0$. Consequently, $\det U(E_{+})$ has a countable bounded number of zeros
$\Lambda'_{+}$. Moreover, $M(\rho)$ is continuous in $\overline{\Pi_{+}}$ outside the bounded set of zeros $\Lambda_{+}$ of $\det U(E_{+})$.

Using $(i_3)$ of Theorem~\ref{thm:FSS}, we derive the asymptotic formula for $M(\rho)$:
\begin{multline*}
M(\rho) = (I + \rho^{-1} T_{-}(0) + o(\rho^{-1})) \bigl[ i \rho (I + h_1) - Q_1(0) + i (I + h_1) T_{-}(0) + h_0 + o(1) \bigr]^{-1} \\
= (i \rho)^{-1} (I + \rho^{-1} T_{-}(0) + o(\rho^{-1}))
\bigl[ I - (I + h_1)^{-1}(i (Q_1(0) - h_0) + (I + h_1) T_{-}(0))  + o(1) \bigr] (I + h_1)^{-1} \\
= (i \rho)^{-1} (I + h_1)^{-1} + (i \rho)^{-2} (I + h_1)^{-1} (Q_1(0) - h_0) (I + h_1)^{-1} + o(\rho^{-2}).
\end{multline*}
In order to prove that $\kappa(\rho) \in L_2$ while $Q^{(s)} \in L_2$, $s = 0, 1$, note that under these conditions
the integrals $J_2(x, \rho)$ and $J_2(a, x, \rho)$ from Steps 2 and 3 of the proof of Theorem~\ref{thm:FSS} 
in Appendix are from $L_2$, and exactly these integrals determine the behavior of $\kappa(\rho)$.
\end{proof}

\medskip

Along with $L$, we consider the pencil $L^* = (\ell^*_{\rho}, U^*_{\rho})$, where
\begin{equation} \label{lU*}
	\begin{array}{l}
    \ell^*_{\rho}(Z) := Z'' + Z (\rho^2 I + 2 i \rho Q_1(x) + Q_0(x)), \\
    U^*_{\rho}(Z) := Z'(0) + Z(0) (i \rho h_1 + h_0),
    \end{array}
\end{equation}
Denote
$$
    \langle Z, Y \rangle = Z'(x) Y(x) - Z(x) Y'(x),
$$
where $Z = [z_k]^T_{k = \overline{1, m}}$ is a row vector ($T$ is the sign for the transposition). Then
\begin{equation} \label{smEq2}
    \langle Z, Y \rangle_{|x = 0} = U^*_{\rho}(Z) Y(0) - Z(0) U_{\rho}(Y).
\end{equation}
If $Y(x, \rho)$ and $Z(x, \rho)$ satisfy the equations $\ell_{\rho} (Y(x, \rho)) = 0$ and $\ell^*_{\rho} (Z(x, \rho)) = 0$
respectively, then
\begin{equation} \label{smEq1}
    \frac{d}{dx} \langle Z(x, \rho), Y(x, \rho) \rangle = 0.
\end{equation} 

Let $\vv^*(x, \rho)$, $S^*(x, \rho)$ and $\Phi^*(x, \rho)$ be the matrices, satisfying 
the equation $\ell^*_{\rho}(Z) = 0$ and the conditions 
$\vv^*(0, \rho) = {S^*}'(0, \rho) = U^*_{\rho}(\Phi^*) = I$, 
$U^*_{\rho}(\vv^*) = S^*(0, \rho) = 0$,
$\Phi^*(x, \rho) = O(\exp(\pm i \rho x))$, $x \to \infty$, $\rho \in \Pi_{\pm}$.
Put $M^*(\rho) := \Phi^*(0, \rho)$.
Let $E^*_{\pm}(x)$ be the matrix solutions of the equation $\ell^*_{\rho}(Z) = 0$
built analogously to $E_{\pm}(x)$ in $\Pi_{\pm}$ (see Theorem~\ref{thm:FSS}).

Then
\begin{equation} \label{PhiexpE*}
 	\Phi^*(x, \rho) = (U^*_{\rho}(E^*_{\pm}))^{-1} E^*_{\pm}(x, \rho), \quad \rho \in \Pi_{\pm},
\end{equation}
\begin{equation} \label{Phiexp*}
    \Phi^*(x, \rho) = S^*(x, \rho) + M^*(\rho) \vv^*(x, \rho).
\end{equation}
According to \eqref{smEq1}, $\langle \Phi^*(x, \rho), \Phi(x, \rho) \rangle$ does not depend on $x$.
Using \eqref{PhiexpE}, \eqref{smEq2}, \eqref{PhiexpE*} and the asymptotics ($i_4$) of Theorem~\ref{thm:FSS},
we get
$$
    \langle \Phi^*(x, \rho), \Phi(x, \rho) \rangle_{|x = 0} = M(\rho) - M^*(\rho), 
$$
$$
 	\langle \Phi^*(x, \rho), \Phi(x, \rho) \rangle_{|x = \iy} = 0.
$$
Therefore, 
\begin{equation} \label{eqMM*}
	M(\rho) \equiv M^*(\rho).
\end{equation}
Similarly,
\begin{equation} \label{wronphi}
  \langle \vv^*(x, \rho), \vv(x, \rho) \rangle = \langle \vv^*(x, \rho), \vv(x, \rho) \rangle_{|x = 0} = 0.
\end{equation}
Using \eqref{smEq2}, one can easily show that
$$
\left[\begin{array}{ll} {\Phi^*}'(x, \rho) & -\Phi^*(x,\rho)\\ -{\vv^*}'(x,\rho) & \vv^*(x,\rho) \end{array}\right]    
\left[\begin{array}{ll} \vv(x,\rho) & \Phi(x,\rho)\\ \vv'(x,\rho) & \Phi'(x,\rho) \end{array}\right] =
\left[\begin{array}{ll} I & 0 \\ 0  & I \end{array}\right].
$$
Hence,
\begin{equation} \label{inverseform}
\left[\begin{array}{ll} \vv(x,\rho) & \Phi(x,\rho)\\ \vv'(x,\rho) & \Phi'(x,\rho) \end{array}\right]^{-1} =
\left[\begin{array}{ll} {\Phi^*}'(x, \rho) & -\Phi^*(x,\rho)\\ -{\vv^*}'(x,\rho) & \vv^*(x,\rho) \end{array}\right].    
\end{equation}

\bigskip

{\large \bf 3. Uniqueness theorem} 

\bigskip

In this section, we show that the Weyl matrix $M(\rho)$ determines the 
pencil $L$ uniquely.

Along with $L$ we consider a pencil $\tilde L$ of the same form but with 
other coefficients $\tilde Q_s(x)$, $\tilde h_s$. We agree that if a symbol 
$\gamma$ denotes an object related to $L$ then $\tilde \gamma$
denotes the corresponding object related to $\tilde L$.

Consider the block-matrix $\mathcal{P}(x,\rho)=[\mathcal{P}_{jk}(x,\rho)]_{j,k=1,2}$
defined by
\begin{equation} \label{defP}
\mathcal{P}(x,\rho) \left[ \begin{array}{ll} \tilde\vv(x,\rho) & \tilde\Phi(x,\rho)\\ \tilde\vv'(x,\rho) & \tilde\Phi'(x,\rho) \end{array}\right]
= \left[\begin{array}{ll} \vv(x,\rho) & \Phi(x,\rho)\\ \vv'(x,\rho) & \Phi'(x,\rho) \end{array}\right].             
\end{equation}
Taking \eqref{inverseform} into account, we calculate
\begin{equation} \label{Pj12}
\begin{array}{l}
\mathcal{P}_{j1}(x,\rho)=\vv^{(j-1)}(x,\rho){\tilde\Phi^{*'}}(x,\rho)-
\Phi^{(j-1)}(x,\rho){\tilde\vv^{*'}}(x,\rho), \\
\mathcal{P}_{j2}(x,\rho)=\Phi^{(j-1)}(x,\rho){\tilde\vv}^{*}(x,\rho)-
\vv^{(j-1)}(x,\rho){\tilde\Phi}^{*}(x,\rho).
\end{array}
\end{equation}

%Using \eqref{Pj12}, Lemma~\ref{lem:P} and Lemma~\ref{lem:asympt}, we get the following result.

\begin{lem} \label{lem:asymptP}
Assume $h_s = \tilde h_s$, $s = 0, 1$.
Then for each fixed $x > 0$, the entries of the block-matrix $\mathcal{P}$ have the following asymptotics as $|\rho| \to \infty$:
$$
  \begin{array}{ll}
   \mathcal{P}_{11}(x, \rho) = \Omega(x) + O(\rho^{-1}), & \mathcal{P}_{12}(x, \rho) = \rho^{-1} \Lambda(x) + O(\rho^{-2}), \\
   \mathcal{P}_{21}(x, \rho) = - \rho \Lambda(x) + O(1), & \mathcal{P}_{22}(x, \rho) = \Omega(x) + O(\rho^{-1}),
  \end{array}
$$                                                                                                             
where
\begin{equation} \label{defOmega}
    \Omega(x) := \frac{1}{2} \left( P_{-}(x) \tilde P^*_{+}(x) + P_{+}(x) \tilde P^*_{-}(x) \right), 
\end{equation}
\begin{equation} \label{defLambda}
    \Lambda(x) := \frac{1}{2 i} \left( P_{-}(x) \tilde P^*_{+}(x) - P_{+}(x) \tilde P^*_{-}(x) \right).
\end{equation}
\end{lem}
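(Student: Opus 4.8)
The plan is to substitute the known asymptotic expansions for the factors appearing in the explicit formulas \eqref{Pj12} and collect terms by powers of $\rho$. By \eqref{Pj12}, each entry $\mathcal{P}_{jk}(x,\rho)$ is a bilinear combination of $\vv^{(j-1)}$ or $\Phi^{(j-1)}$ (from the unstarred pencil $L$) with $\tilde\vv^{*}$, $\tilde\vv^{*\prime}$, $\tilde\Phi^{*}$, $\tilde\Phi^{*\prime}$ (from the starred tilde-pencil $\tilde L^*$). For the unstarred factors I would use Lemma~\ref{lem:asympt}: the function $\vv^{(\nu)}$ carries two exponential branches $\exp(\pm i\rho x)$ with leading coefficients $\tfrac12 P_{\mp}(x)(I\mp h_1)$, while $\Phi^{(\nu)}$ is of order $\rho^{\nu-1}$ with leading coefficient $(\pm i\rho)^{\nu-1}P_{\mp}(x)(I\pm h_1)^{-1}$ in $\overline{\Pi_\pm}$. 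For the starred tilde-factors I need the analogue of Lemma~\ref{lem:asympt} for the pencil $\tilde L^*$; since the starred objects satisfy the transposed-type equation \eqref{lU*} and are built from $\tilde E^*_\pm$ in exactly the same way, the corresponding expansions hold with the coefficient matrices appearing on the \emph{right} and with $\tilde P^*_{\mp}(x)$ in place of $P_{\mp}(x)$. Concretely, $\tilde\vv^{*(\nu)}$ has leading coefficients of the form $\tfrac12(I\mp \tilde h_1)\tilde P^*_{\mp}(x)$ and $\tilde\Phi^{*(\nu)}$ is of order $\rho^{\nu-1}$ with leading coefficient $(\pm i\rho)^{\nu-1}(I\pm\tilde h_1)^{-1}\tilde P^*_{\mp}(x)$.

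The key mechanism is \emph{cancellation of the growing exponentials}. In $\mathcal{P}_{11}$ and $\mathcal{P}_{12}$ the factor $\Phi^{(j-1)}$ (and $\tilde\Phi^{*}$) has no exponential growth for large $|\rho|$, being $O(\rho^{\nu-1})$, so those products are manageable; but the $\vv$ factor contains $\exp(\pm i\rho x)$, which is unbounded in the appropriate half-plane. The reason the final asymptotics are polynomially bounded is that in the products $\vv^{(j-1)}\tilde\Phi^{*\prime}-\Phi^{(j-1)}\tilde\vv^{*\prime}$ etc., the growing exponential branch of $\vv$ is paired with the decaying branch of $\Phi$ (or $\tilde\Phi^*$), so that after using the hypothesis $h_s=\tilde h_s$ the dangerous exponentials drop out and only the products $P_{\mp}(x)\tilde P^*_{\pm}(x)$ survive. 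This is precisely where the assumption $h_s=\tilde h_s$ is essential: it makes the factors $(I\pm h_1)$ and $(I\pm\tilde h_1)^{-1}$ coming from $\vv$ and $\tilde\Phi^*$ (respectively) combine to the identity, leaving the clean coefficients $\tfrac12\bigl(P_{-}\tilde P^*_{+}\pm P_{+}\tilde P^*_{-}\bigr)$. I would carry out this collection branch-by-branch for each of the four entries, substituting $\nu=j-1\in\{0,1\}$, and read off $\Omega(x)$ from the even (symmetric) combination and $\Lambda(x)$ from the odd (antisymmetric) combination, matching the powers of $\rho$ displayed in the statement.

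The main obstacle is bookkeeping the two exponential branches simultaneously and verifying that every unbounded term genuinely cancels rather than merely being absorbed into an error estimate. One must check that the surviving $O(\cdot)$ remainders are uniform in the stated sense for each fixed $x$ and correctly track the extra factor of $\rho$ in $\mathcal{P}_{21}$ (coming from the $\nu=1$ derivative in $\vv'$ and $\Phi'$) and the factor $\rho^{-1}$ in $\mathcal{P}_{12}$. The cleanest way to organize this is to compute the leading-order products $\vv P_{\mp}\cdot\tilde\Phi^*$-type and $\Phi\cdot\tilde\vv^*$-type terms separately, noting that each contributes one of the two summands in \eqref{defOmega}--\eqref{defLambda}, and to confirm that the cross terms involving $(I\pm h_1)(I\mp\tilde h_1)^{-1}$ with mismatched signs carry the exponential $\exp(\pm 2i\rho x)$ and are therefore exponentially small in the relevant half-plane (again using $h_s=\tilde h_s$ to guarantee the matching). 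Once the exponential cancellation is established, reading off $\Omega$ and $\Lambda$ and their orders in $\rho$ is a direct substitution.
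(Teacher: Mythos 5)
Your plan is, in substance, the paper's own proof: substitute the expansions of Lemma~\ref{lem:asympt} and their starred analogues for $\tilde L^*$ into \eqref{Pj12}, collect the two exponential branches, and use $h_s = \tilde h_s$ to make the sign-matched products collapse to $\tfrac12 P_{\mp}\tilde P^*_{\pm}$, which yields \eqref{defOmega} and \eqref{defLambda} with the stated powers of $\rho$. Your identification of which products survive, and of where the hypothesis $h_s = \tilde h_s$ enters, is correct.

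There is, however, one step in your plan that would fail as stated: you dispose of the cross terms carrying $\exp(\pm 2 i \rho x)$ by declaring them ``exponentially small in the relevant half-plane.'' That is true only in the open half-plane with $|\mbox{Im}\,\rho| \to \infty$; the lemma must hold as $|\rho| \to \infty$ in the \emph{closed} half-planes $\overline{\Pi_{\pm}}$ --- in particular along the real axis, where it is actually applied later (the contour $\ga$ and the circles $|\theta| = R$ in Lemma~\ref{lem:contour} and in the uniqueness theorem lie on or cross the real axis), and there $|\exp(2 i \rho x)| = 1$, not small. The correct mechanism, which the paper uses, is an \emph{exact algebraic cancellation at leading order}: the two $\exp(2 i \rho x)$ terms in, say, $\mathcal{P}_{11}$ carry the coefficients $(I - \tilde h_1)(I + \tilde h_1)^{-1}$ and $(I + h_1)^{-1}(I - \tilde h_1)$, and when $h_1 = \tilde h_1$ these coincide because $(I - h_1)(I + h_1)^{-1} = (I + h_1)^{-1}(I - h_1)$ (rational functions of the same matrix commute --- note this commutation is not automatic for matrices and must be invoked explicitly; your phrase ``combine to the identity'' covers the surviving terms but not these cancelling ones). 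After this exact cancellation, the residual contribution is $\exp(2 i \rho x)\, O(\rho^{-1})$, and one only needs the \emph{boundedness} of $\exp(2 i \rho x)$ in $\overline{\Pi_{+}}$ (respectively $\exp(-2 i \rho x)$ in $\overline{\Pi_{-}}$) to absorb it into the $O(\rho^{-1})$ remainder. With that repair --- cancellation first, boundedness (not smallness) of the exponential second --- your bookkeeping goes through exactly as in the paper.
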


\begin{proof}
We prove the asymptotics for $\mathcal{P}_{11}(x, \rho)$ and $\mathcal{P}_{12}(x, \rho)$ in $\Pi_{+}$. The other 
relations can be obtained similarly. In our calculations, we will use the notation
$[I] = I + O(\rho^{-1})$, $|\rho| \to \infty$. Substitute the asymptotics $(i_3)$ of Lemma~\ref{lem:asympt} 
and the following analogous formulas
$$
 	\tilde \vv^{*(\nu)}(x, \rho) = \frac{(i \rho)^{\nu}}{2} \exp(i \rho x) (I - \tilde h_1) \tilde P_{-}^*(x)[I] +  
 	\frac{(- i \rho)^{\nu}}{2} \exp(- i \rho x) (I + \tilde h_1) \tilde P_{+}^*(x)[I], 
$$
$$
 	\tilde \Phi^{*(\nu)}(x, \rho) = (\pm i \rho)^{\nu - 1} (I \pm \tilde h_1)^{-1} \tilde P^*_{\mp}(x)[I],  \quad \nu = 0, 1,
$$
into \eqref{Pj12}:
\begin{multline*}
\mathcal{P}_{11}(x, \rho) = \bigl\{2^{-1} \exp(i \rho x) P_{-}(x) (I - h_1)[I] + 
2^{-1} \exp(- i \rho x) P_{+}(x)(I + h_1)[I]\bigr\} \\ \cdot \exp(i \rho x)(I + \tilde h_1)^{-1} \tilde P_{-}^{*}(x)[I] 
- (i \rho)^{-1} P_{-}(x)(I + h_1)^{-1}[I] \bigl\{ 2^{-1}i \rho \exp(i \rho x) (I - \tilde h_1)\tilde P_{-}^*(x)[I]
\\ - 2^{-1} i \rho \exp(- i \rho x) (I + \tilde h_1) \tilde P_{+}^*(x)[I]\bigr\} \\ = 2^{-1} \exp(2 i \rho x) P_{-}(x)
(I - \tilde h_1) (I + \tilde h_1)^{-1} \tilde P_{-}^*(x) [I]  + 2^{-1} P_{+}(x)(I + h_1) (I + \tilde h_1)^{-1} 
\tilde P_{-}^*(x)[I] \\ - 2^{-1} \exp(2 i \rho x) P_{-}(x) (I + h_1)^{-1} (I - \tilde h_1) \tilde P_{-}^*(x)[I]
+ 2^{-1} P_{-}(x) (I + h_1)^{-1} (I + \tilde h_1) \tilde P_{+}^{*}(x)[I],
\end{multline*}                                   
\begin{multline*}
\mathcal{P}_{12}(x, \rho) = (i \rho)^{-1} \exp(i \rho x) P_{-}(x) (I + h_1)^{-1}[I] \bigl\{
2^{-1} \exp(i \rho x) (I + \tilde h_1) \tilde P_{-}^*[I] \\ + 2^{-1} \exp(- i \rho x) (I + \tilde h_1) \tilde P_{+}^*(x)[I]
\bigr\} - \bigl\{ 2^{-1} \exp(i \rho x) P_{-}(x) (I - h_1)[I] \\ + 2^{-1} \exp(-i \rho x) P_{+}(x) (I + h_1) [I])
(i \rho)^{-1} \exp(i \rho x) (I + \tilde h_1)^{-1} \tilde P_{-}^*(x)[I] 
\bigr\} \\ = (2 i \rho)^{-1} \exp(2 i \rho x) P_{-}(x) (I + h_1)^{-1} (I - \tilde h_1)\tilde P_{-}^*(x)[I] +
(2 i \rho)^{-1} P_{-}(x) (I + h_1)^{-1} (I + \tilde h_1) \tilde P_{+}^*(x)[I] \\ - (2 i \rho)^{-1} P_{-}(x) (I - h_1) (I + \tilde h_1)^{-1}
\tilde P_{-}^*(x)[I] - (2 i \rho)^{-1} P_{+}(x) (I + h_1) (I + \tilde h_1)^{-1} \tilde P_{-}^*(x)[I],
\end{multline*}
for each fixed $x > 0$, $|\rho| \to \infty$.

If $h_1 = \tilde h_1$, we have
$$
 	(I - \tilde h_1) (I + \tilde h_1)^{-1} = (I + h_1)^{-1} (I - \tilde h_1). 
$$          
The exponent $\exp(2 i \rho x)$ is bounded for $\rho \in \Pi_{+}$. Consequently, we arrive at 
the required asymptotic representations.
\end{proof}
            
Now let us formulate and prove the uniqueness theorem.

\begin{thm}
If $M(\rho) = \tilde M(\rho)$, then $L = \tilde L$.
Hence the Weyl matrix determines the coefficients of the pencil \eqref{eqv}, \eqref{BC} uniquely.
\end{thm}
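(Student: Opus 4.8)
The plan is to run the method of spectral mappings, using the block matrix $\mathcal{P}(x,\rho)$ from \eqref{defP} as a bridge between $L$ and $\tilde L$. As a preliminary normalization I first compare the leading terms of the asymptotics of $M$ and $\tilde M$ from Lemma~\ref{lem:asymptM}: in $\Pi_{+}$ one has $M(\rho)=(i\rho)^{-1}(I+h_1)^{-1}+O(\rho^{-2})$, so $M\equiv\tilde M$ yields $(I+h_1)^{-1}=(I+\tilde h_1)^{-1}$ and hence $h_1=\tilde h_1$; this is exactly the hypothesis that makes Lemma~\ref{lem:asymptP} applicable. The remaining work splits into two parts: showing that $M\equiv\tilde M$ forces every entry $\mathcal{P}_{jk}(x,\rho)$ to be entire in $\rho$, and then pinning $\mathcal{P}$ down by Liouville's theorem in order to read off the equality of all coefficients.

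The crux is the entireness. I substitute $\Phi=S+\vv M$, $\Phi^*=S^*+M^*\vv^*$ from \eqref{Phiexp}, \eqref{Phiexp*} (together with their tilde analogues) into \eqref{Pj12}. Using $M^*\equiv M$, $\tilde M^*\equiv\tilde M$ from \eqref{eqMM*} and the hypothesis $M\equiv\tilde M$, the terms carrying the Weyl matrix cancel and there remains
\[
\mathcal{P}_{j1}=\vv^{(j-1)}{\tilde S^*}'-S^{(j-1)}{\tilde\vv^*}',\qquad
\mathcal{P}_{j2}=S^{(j-1)}\tilde\vv^{*}-\vv^{(j-1)}\tilde S^{*}.
\]
Since $S,\vv,S^*,\vv^*$ are solutions of $\ell_\rho(Y)=0$ and $\ell^*_\rho(Z)=0$ with initial data polynomial in $\rho$, they are entire in $\rho$, and therefore so is each $\mathcal{P}_{jk}(x,\rho)$ on the whole plane.

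Now the asymptotics of Lemma~\ref{lem:asymptP} hold in $\overline{\Pi_+}\cup\overline{\Pi_-}=\mathbb{C}$, so $\mathcal{P}_{11},\mathcal{P}_{22}$ are entire and bounded, hence constant in $\rho$ and equal to their limit $\Omega(x)$, while $\mathcal{P}_{12}$ is entire and tends to $0$, hence $\equiv0$, by Liouville's theorem. The first block-row of \eqref{defP} then gives $\vv(x,\rho)=\Omega(x)\tilde\vv(x,\rho)$ for all $x,\rho$, with $\Omega$ independent of $\rho$. Differentiating twice and eliminating $\vv''$ and $\tilde\vv''$ through $\ell_\rho(\vv)=0$ and the analogous equation $\tilde\vv''+(\rho^2 I+2i\rho\tilde Q_1+\tilde Q_0)\tilde\vv=0$, the $\rho^2$-terms cancel and I obtain
\[
\bigl[\Omega''+2i\rho(Q_1\Omega-\Omega\tilde Q_1)+Q_0\Omega-\Omega\tilde Q_0\bigr]\tilde\vv+2\Omega'\tilde\vv'=0 .
\]
Inserting the asymptotics of $\tilde\vv,\tilde\vv'$ from Lemma~\ref{lem:asympt} and separating the independent exponentials $e^{\pm i\rho x}$ at order $\rho$ (using the invertibility of $\tilde P_{\mp}(x)(I\mp\tilde h_1)$, guaranteed by Lemma~\ref{lem:P} and $\det(I\pm h_1)\neq0$) produces the two exact identities $Q_1\Omega-\Omega\tilde Q_1\pm\Omega'=0$, whence $\Omega'\equiv0$ and $Q_1\Omega=\Omega\tilde Q_1$. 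Since $\Omega(0)=I$ by \eqref{defOmega}, \eqref{cauchyP}, \eqref{cauchyP*}, this gives $\Omega\equiv I$ and $Q_1=\tilde Q_1$; feeding $\Omega\equiv I$ back into the displayed identity reduces it to $(Q_0-\tilde Q_0)\tilde\vv=0$, so $Q_0=\tilde Q_0$. Finally, $\vv\equiv\tilde\vv$ together with $U_\rho(\vv)=0=U_\rho(\tilde\vv)$ forces $\vv'(0,\rho)=-(i\rho h_1+h_0)$ to equal $-(i\rho\tilde h_1+\tilde h_0)$ for all $\rho$, so $h_0=\tilde h_0$ (and again $h_1=\tilde h_1$); thus $L=\tilde L$.

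I expect the pole-cancellation to be the main obstacle: one must verify that all the $\rho$-singularities of $\Phi,\Phi^*,\tilde\Phi,\tilde\Phi^*$, coming from the zeros of $\det U(E_\pm)$ and $\det\tilde U(\tilde E_\pm)$ (the sets $\Lambda'_{\pm},\tilde\Lambda'_{\pm}$), genuinely disappear after using $M\equiv\tilde M$, so that $\mathcal{P}$ is analytic across the whole plane and Liouville applies. A secondary technical point is the clean separation of the final asymptotic identity into independent exponential and power-of-$\rho$ components, which is exactly where the invertibility hypotheses on $I\pm h_1$ and on $P_\pm$ enter decisively.
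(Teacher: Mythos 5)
Your proposal is correct, and its core coincides with the paper's: you substitute \eqref{Phiexp} and \eqref{Phiexp*} into \eqref{Pj12}, use $M^*\equiv M$, $\tilde M^*\equiv\tilde M$ from \eqref{eqMM*} together with $M\equiv\tilde M$ to cancel all Weyl-matrix terms (this is exactly how the paper handles the pole issue you flag as the main obstacle: the singular sets $\Lambda'_{\pm},\tilde\Lambda'_{\pm}$ enter only through $M$ and $\tilde M^*$, which cancel identically, leaving the manifestly entire functions $S,\vv,\tilde S^*,\tilde\vv^*$), and then apply Lemma~\ref{lem:asymptP} and Liouville to get $\mathcal{P}_{11}\equiv\Omega(x)$, $\mathcal{P}_{12}\equiv0$. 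Where you genuinely diverge is the endgame. The paper squeezes one more fact out of Liouville: since $\mathcal{P}_{12}\equiv0$ while $\mathcal{P}_{12}=\rho^{-1}\Lambda(x)+O(\rho^{-2})$, it concludes $\Lambda(x)\equiv0$, and then uses the differential identity $\Omega'(x)=i\bigl(\Lambda(x)\tilde Q_1(x)-Q_1(x)\Lambda(x)\bigr)$, obtained from \eqref{defOmega}, \eqref{defLambda}, \eqref{cauchyP}, \eqref{cauchyP*}, to get $\Omega\equiv\Omega(0)=I$ directly, whence $\vv\equiv\tilde\vv$, $\Phi\equiv\tilde\Phi$ and $L=\tilde L$. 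You never touch $\Lambda$; instead you differentiate $\vv=\Omega(x)\tilde\vv$ twice, eliminate the second derivatives via the two pencils, and separate the exponentials at order $\rho$ (taking real $\rho\to\infty$, where both exponentials have modulus one, and using the invertibility of $\tilde P_{\mp}(x)(I\mp h_1)$) to obtain $\Omega'\equiv0$ and $Q_1\Omega=\Omega\tilde Q_1$, hence $\Omega\equiv I$, $Q_1=\tilde Q_1$, then $Q_0=\tilde Q_0$ and $h_0=\tilde h_0$; this computation is valid. Your route is longer but buys explicitness where the paper is terse: the paper's closing ``consequently, $L=\tilde L$'' silently relies on reading the coefficients off identical solutions, which is precisely the step you carry out in detail, while the paper's $\Lambda$-argument buys a slicker, purely algebraic conclusion. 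One bookkeeping point: Lemma~\ref{lem:asymptP} is stated under $h_s=\tilde h_s$ for \emph{both} $s=0,1$, whereas you verify only $h_1=\tilde h_1$ before invoking it. This is harmless --- $h_0=\tilde h_0$ (and $Q_1(0)=\tilde Q_1(0)$) follows at once by comparing the second-order terms of Lemma~\ref{lem:asymptM} in $\Pi_+$ and $\Pi_-$, and the proof of Lemma~\ref{lem:asymptP} actually uses only $h_1=\tilde h_1$ --- but you should record it, as the paper does.
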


\begin{proof}
Substituting \eqref{Phiexp} and \eqref{Phiexp*} into \eqref{Pj12}, we get
$$
    \mathcal{P}_{11} = \vv \tilde S^{*'} - S\tilde \vv^{*'} + \vv (\tilde M^* - M) \tilde \vv^*,
$$
$$
    \mathcal{P}_{12} = S \tilde \vv^* - \vv \tilde S^{*} + \vv (M - \tilde M^*) \tilde \vv^*,    
$$
Since $M(\rho) = \tilde M(\rho) = \tilde M^*(\rho)$, for each fixed $x > 0$,
the matrix functions $\mathcal{P}_{11}(x, \rho)$
and $\mathcal{P}_{12}(x, \rho)$ are entire in $\rho$-plane. 
Hence these matrices are constant for each fixed $x > 0$. Furthermore, the assertion $M(\rho) = \tilde M(\rho)$
together with Lemma~\ref{lem:asymptM} yields $h_s = \tilde h_s$ for $s = 0, 1$, so we can use Lemma~\ref{lem:asymptP}.
Therefore, we conclude $\mathcal{P}_{11}(x, \rho) \equiv \Omega(x)$, $\mathcal{P}_{12}(x, \rho) \equiv 0$
and $\Lambda(x) \equiv 0$.

Differentiating \eqref{defOmega} and using \eqref{cauchyP}, \eqref{cauchyP*} and \eqref{defLambda}, we get
$$
    \Omega'(x) = i (\Lambda(x) \tilde Q_1(x) - Q_1(x) \Lambda(x) ) = 0.
$$
Therefore, $\Omega(x) \equiv \Omega(0) \equiv I$, $x > 0$.
Thus, $\mathcal{P}_{11}(x, \rho) = I$, $x > 0$. By virtue of~\eqref{defP} we have 
$\vv(x, \rho) \equiv \tilde \vv(x, \rho)$,
$\Phi(x, \rho) \equiv \tilde \Phi(x, \rho)$ and consequently, $L = \tilde L$.
\end{proof}

\bigskip

{\large \bf 4. Main equation of Inverse Problem 1} 

\bigskip

In this section, we assume that the Weyl matrix $M(\rho)$ of the pencil $L$ is given.
We derive a linear equation in an appropriate Banach space, called
the main equation.
This equation plays the central role in the constructive solution
of Inverse Problem 1. We investigate the solvability of the main equation.
In contrast to the case of the Sturm-Liouville operator,
the main equation for the pencil is solvable only under certain conditions on the choice of a model pencil.

\medskip

Denote
\begin{equation} \label{defD}
 	D(x, \rho, \theta) = \frac{\langle \vv^*(x, \theta), \varphi(x, \rho) \rangle}{\rho - \theta}.
\end{equation}
Using the relations $\ell_{\rho}(\vv) = 0$ and $\ell_{\theta}(\vv^*) = 0$, we derive
$$
 	\frac{d}{dx}\langle \vv^*(x, \theta), \varphi(x, \rho) \rangle = 
 	(\rho - \theta) \vv^*(x, \theta) ((\rho + \theta) I + 2 i Q_1(x)) \vv(x, \rho).
$$
Moreover,
$$
 	\langle \vv^*(x, \theta), \varphi(x, \rho) \rangle_{|x = 0} = (\rho - \theta) i h_1.	
$$
Consequently, 
$$
  	D(x, \rho, \theta) = i h_1 + \int_0^x \vv^*(s, \theta) ((\rho + \theta) I + 2 i Q_1(s)) \vv(s, \rho) \, ds.
$$
Similarly to the scalar case (see \cite{Yur00}[Lemma 3]), one can obtain the following estimate
\begin{equation} \label{estD}
 	\| D(x, \rho, \theta) \| \le C_x \frac{|\rho| + |\theta| + 1}{|\rho - \theta| + 1} 
 	\exp(|\mbox{Im}\,\rho|x) \exp(|\mbox{Im}\,\theta|x),
\end{equation}
where $C_x$ is a constant depending on $x$.

Put 
$$
 	M_{\pm}(\rho) = \lim_{z \to 0, \mbox{Re}\, z > 0} M(\rho \pm i z), \quad \rho \in \mathbb{R} \backslash \Lambda_{\pm}.
$$

Let the Weyl matrix $M(\rho)$ of the pencil $L$ be given. Using Lemma~\ref{lem:asymptM}, 
we can determine $h_1$, $h_0$ and $Q_1(0)$.
We choose a model pencil $\tilde L$ such that
for sufficiently large $\rho^* > 0$ we have
\begin{equation} \label{intM}
	\int_{|\rho| > \rho^*} \|M^{\pm}(\rho) - \tilde M^{\pm}(\rho)\|^2 \rho^4 \, d \rho < \iy.	
\end{equation}
According to Lemma~\ref{lem:asymptM}, such a model pencil can be easily chosen in the case 
$Q^{(s)}_s \in L_2((0, +\infty), \mathbb{C}^{m \times m})$, $s = 0, 1$.
It follows from \eqref{intM}, that 
\begin{equation} \label{eqhs}
	h_s = \tilde h_s, \quad s = 0, 1, 
\end{equation}
and $\tilde Q_1(0) = Q_1(0)$. 

Denote 
\begin{equation} \label{defr}
 	\hat M(\rho) := M(\rho) - \tilde M(\rho), \quad r(x, \rho, \theta) := \hat M(\theta) D(x, \rho, \theta), 
 	\quad \tilde r(x, \rho, \theta) := \hat M(\theta) \tilde D(x, \rho, \theta).
\end{equation}

Let $\ga_0$ be a bounded closed contour in the $\rho$--plane, oriented counterclockwise and enclosing
the sets $\Lambda_{\pm}$, $\tilde \Lambda_{\pm}$ and $\{ 0 \}$, and let $\ga_{\pm}$ be a two--sided cut 
along the ray $\{ \rho \colon \pm \rho > 0, \rho \notin \mbox{int}\, \ga_0  \}$. Denote $\ga = \ga_{-} \cup \ga_0 \cup \ga_{+}$
and $J_{\ga} = \{ \rho \colon \rho \notin \ga \cup \mbox{int}\, \ga_0\}$.

\begin{lem} \label{lem:contour}
The following relations hold:
\begin{equation} \label{cont1}
 	\Omega(x) \tilde \vv(x, \rho) = \vv(x, \rho) + \frac{1}{2 \pi i} \int_{\gamma} \vv(x, \theta) \tilde r(x, \rho, \theta) \, d \theta,
\end{equation}
\begin{equation} \label{cont2}
 	r(x, \rho, \theta) - \tilde r(x, \rho, \theta) + \frac{1}{2 \pi i} \int_{\gamma} r(x, \xi, \theta) \tilde r(x, \rho, \xi) \, d \xi =
 	\hat M(\theta) \vv^*(x, \theta) \Lambda(x) \tilde \vv(x, \rho).
\end{equation}
\end{lem}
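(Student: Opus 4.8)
The plan is to obtain both identities from the block--matrix $\mathcal{P}(x,\rho)$ of \eqref{defP} by contour integration in the $\rho$--plane, in the spirit of the method of spectral mappings. The preliminary step is to combine \eqref{Phiexp}, \eqref{Phiexp*} with $M\equiv M^*$, $\tilde M\equiv\tilde M^*$ from \eqref{eqMM*}: inserting $\Phi=S+\vv M$ and $\Phi^*=S^*+M^*\vv^*$ into \eqref{Pj12} splits every block into an entire part and a part carrying $\hat M$, e.g.
\[
\mathcal{P}_{11}=\bigl(\vv\,\tilde S^{*'}-S\,\tilde\vv^{*'}\bigr)-\vv\,\hat M\,\tilde\vv^{*'},\qquad
\mathcal{P}_{12}=\bigl(S\,\tilde\vv^{*}-\vv\,\tilde S^{*}\bigr)+\vv\,\hat M\,\tilde\vv^{*},
\]
and similarly for $\mathcal{P}_{21},\mathcal{P}_{22}$. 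Since $S,\vv,S^*,\vv^*$ are entire in $\rho$, the only singularities of the blocks are those of $\hat M$, and all of them --- the bounded pole sets $\Lambda_\pm,\tilde\Lambda_\pm$ surrounded by $\gamma_0$ together with the jump across the real axis captured by the two--sided cuts $\gamma_\pm$ --- lie inside $\gamma$.

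For \eqref{cont1} I would start from the first row of \eqref{defP}, namely $\vv=\mathcal{P}_{11}\tilde\vv+\mathcal{P}_{12}\tilde\vv'$. Integrating $\mathcal{P}_{1k}(x,\cdot)$ against $(\,\cdot-\rho)^{-1}$ over a large circle and contracting it onto $\gamma$, while using the asymptotics $\mathcal{P}_{11}=\Omega+O(\rho^{-1})$, $\mathcal{P}_{12}=O(\rho^{-1})$ of Lemma~\ref{lem:asymptP} to discard the contribution at infinity, yields
\[
\mathcal{P}_{11}(x,\rho)=\Omega(x)+\frac{1}{2\pi i}\int_{\gamma}\frac{\vv(x,\theta)\hat M(\theta)\tilde\vv^{*'}(x,\theta)}{\theta-\rho}\,d\theta,\qquad
\mathcal{P}_{12}(x,\rho)=-\frac{1}{2\pi i}\int_{\gamma}\frac{\vv(x,\theta)\hat M(\theta)\tilde\vv^{*}(x,\theta)}{\theta-\rho}\,d\theta.
\]
Substituting these into the first--row relation and assembling $\tilde\vv^{*'}(x,\theta)\tilde\vv(x,\rho)-\tilde\vv^{*}(x,\theta)\tilde\vv'(x,\rho)=\langle\tilde\vv^*(x,\theta),\tilde\vv(x,\rho)\rangle$ reproduces $\tilde D$ of \eqref{defD}, hence $\tilde r=\hat M\tilde D$; rearranging gives \eqref{cont1} at once.

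For \eqref{cont2} the key observation is that $\hat M(\theta)$ is a common left factor of every term, so it is enough to prove the reduced identity
\[
D(x,\rho,\theta)-\tilde D(x,\rho,\theta)+\frac{1}{2\pi i}\int_{\gamma}D(x,\xi,\theta)\,\tilde r(x,\rho,\xi)\,d\xi=\vv^*(x,\theta)\,\Lambda(x)\,\tilde\vv(x,\rho)
\]
and then multiply on the left by $\hat M(\theta)$. I would expand $D(x,\xi,\theta)$ through its Wronskian, split the double kernel with the partial fraction $\frac{1}{(\xi-\theta)(\rho-\xi)}=\frac{1}{\rho-\theta}\bigl(\frac{1}{\xi-\theta}+\frac{1}{\rho-\xi}\bigr)$, evaluate the $(\xi-\theta)^{-1}$ pieces by the block representations above (and their analogues for $\mathcal{P}_{21},\mathcal{P}_{22}$), and the $(\rho-\xi)^{-1}$ pieces by \eqref{cont1} and by its second--row counterpart for $\vv'$. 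The algebraic identities
\[
\vv^{*'}\mathcal{P}_{11}-\vv^*\mathcal{P}_{21}=\tilde\vv^{*'},\qquad
\vv^{*'}\mathcal{P}_{12}-\vv^*\mathcal{P}_{22}=-\tilde\vv^{*},
\]
which follow from \eqref{inverseform}, \eqref{defP} and $\langle\vv^*,\vv\rangle=0$ of \eqref{wronphi}, then collapse the principal part of the sum exactly to $\tilde D$.

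The hard part will be the block $\mathcal{P}_{21}$, which by Lemma~\ref{lem:asymptP} grows like $-\rho\Lambda(x)$ rather than tending to a constant, so the naive Cauchy representation is invalid. I would subtract the polynomial $-\rho\Lambda$ before contracting the contour: the undetermined $O(1)$ term of $\mathcal{P}_{21}$ cancels between the $(\xi-\theta)^{-1}$ and $(\rho-\xi)^{-1}$ contributions, whereas the subtracted $-\rho\Lambda$ leaves, after division by $\rho-\theta$, precisely the surplus $\vv^*(x,\theta)\Lambda(x)\tilde\vv(x,\rho)$ in the reduced identity. The remaining effort is purely technical: justifying the contraction of the large circle onto the unbounded contour $\gamma$, the convergence of the integrals along the cuts $\gamma_\pm$, and the passage to boundary values as $\theta\to\gamma$, all of which rest on the estimate \eqref{estD} and on the weighted $L_2$--decay of $\hat M$ guaranteed by \eqref{intM}.
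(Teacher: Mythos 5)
Your proposal is correct and takes essentially the same route as the paper's proof: the same Cauchy-type representations for the blocks of $\mathcal{P}(x,\rho)$ on the contour $\gamma$ (with the entire $S$-, $\tilde S^*$-parts discarded by Cauchy's theorem), the same reduction of \eqref{cont2} to the $\hat M$-free identity for $D-\tilde D$ with left multiplication by $\hat M(\theta)$ at the end, and the same algebraic relations coming from \eqref{defP} and \eqref{inverseform}. Your device of subtracting $-\rho\Lambda(x)$ from $\mathcal{P}_{21}$ is precisely the paper's two-variable difference-quotient trick with the kernel $\bigl((\rho-\xi)(\xi-\theta)\bigr)^{-1}$, which produces the extra term $\vv^*(x,\theta)\Lambda(x)\tilde\vv(x,\rho)$ in exactly the same way.
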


\begin{proof}
1. Consider the contour $\ga_R := \ga \cap \{ \rho \colon |\rho| \le R \}$  
oriented counterclockwise and the contour  
$\ga_R^0 := \ga_R$ oriented clockwise. By Cauchy's integral formula, 
$$
  \mathcal{P}_{1k}(x, \rho) - \Omega(x) \delta_{1k} = \frac{1}{2 \pi i} 
  \int_{\ga_R^0} \frac{\mathcal{P}_{1k}(x, \theta) - \Omega(x) \delta_{1k}}{\rho - \theta} \, d \theta, \quad 
  k = 1, 2, \quad x \ge 0, \quad \rho \in \mbox{int}\, \ga_R^0, 
$$
where $\de_{jk}$ is the Kroneker delta.
Using \eqref{eqhs} and Lemma~\ref{lem:asymptP}, we obtain
$$
	\lim_{R \to \infty} \int_{|\theta| = R} \frac{\mathcal{P}_{1k}(x, \theta) - \Omega(x) \delta_{1k}}{\rho - \theta} \, d \theta = 0.
$$
Consequently,
\begin{equation} \label{smeq3}
 	\mathcal{P}_{1k}(x, \rho) = \Omega(x) \delta_{1k} + \frac{1}{2 \pi i} \int_{\ga} \frac{\mathcal{P}_{1k}(x, \theta)}
 	{\rho - \theta} \, d \theta, \quad k = 1, 2, \quad x \ge 0, \quad \rho \in J_{\ga}.
\end{equation}
Here and below the integral is understood in sense of a principal value: $\int_{\ga} := \lim_{R \to \iy} \int_{\ga_R}$.

According to \eqref{defP},
\begin{equation} \label{smeq4}
  \vv(x, \rho) = \mathcal{P}_{11}(x, \rho) \tilde \vv(x, \rho) + \mathcal{P}_{12}(x, \rho) \tilde \vv'(x, \rho).
\end{equation}
Substituting \eqref{smeq3} into \eqref{smeq4} and using \eqref{Pj12}, we obtain
\begin{multline*}
  \vv(x, \rho) = \Omega(x) \tilde \vv(x, \rho) + \frac{1}{2 \pi i} \int_{\ga} \bigl[
  (\vv(x, \theta) \tilde {\Phi^*}'(x, \theta) - \Phi(x, \theta) \tilde {\vv^*}'(x, \theta)) \tilde \vv(x, \rho) + \\+
  (\Phi(x, \theta) \tilde \vv^*(x, \theta) - \vv(x, \theta) \tilde \Phi^*(x, \theta)) \tilde \vv'(x, \rho) \bigr]
  \frac{d \, \theta}{\rho - \theta}.
\end{multline*}                                                                
In view of \eqref{Phiexp}, \eqref{Phiexp*} and \eqref{defD}, this yields
$$
	\vv(x, \rho) = \Omega(x) \tilde \vv(x, \rho) - \frac{1}{2 \pi i} \int_{\ga} \vv(x, \theta) \hat M(\theta) \tilde D(x, \rho, \theta) \, d \theta,
$$
since the terms with $S(x, \theta)$ and $\tilde S^*(x, \theta)$ vanish by Cauchy's theorem.
Taking \eqref{defr} into account, we arrive at \eqref{cont1}. The relation \eqref{cont1} holds for
all $x \ge 0$ and $\rho \in \mathbb{C}$, since both left and right hand sides are entire functions of $\rho$. 

2. Since 
$$
\frac{1}{\rho - \theta}\left(\frac{1}{\rho - \xi} - \frac{1}{\theta - \xi}\right) = \frac{1}{(\rho - \xi)(\xi - \theta)},
$$
analogously to \eqref{smeq3}, for $x \ge 0$ and $\rho, \theta \in J_{\ga}$, we derive
$$
 \frac{\mathcal{P}_{21}(x, \rho) - \mathcal{P}_{21}(x, \theta)}{\rho - \theta} = -\Lambda(x) + \frac{1}{2 \pi i} \int_{\ga}
 \frac{\mathcal{P}_{21}(x, \xi)}{(\rho - \xi)(\xi - \theta)} \, d \xi,
$$
$$
  \frac{\mathcal{P}_{jk}(x, \rho) - \mathcal{P}_{jk}(x, \theta)}{\rho - \theta} = \frac{1}{2 \pi i} \int_{\ga}
  \frac{\mathcal{P}_{jk}(x, \xi)}{(\rho - \xi)(\xi - \theta)} \, d \xi, \quad (j, k) \ne (2, 1).
$$
For an arbitrary vector function $Y = Y(x)$, in view of \eqref{inverseform} and \eqref{defP}, we have
$$
	\mathcal{P} \left[ \begin{array}{l} Y \\ Y' \end{array} \right] = 
	\left[ \begin{array}{l} \vv \\ \vv' \end{array} \right] \langle \tilde \Phi^*, Y \rangle -
	\left[ \begin{array}{l} \Phi \\ \Phi' \end{array} \right] \langle \tilde \vv^*, Y \rangle.
$$
Therefore
\begin{multline*}
 	\frac{\mathcal{P}(x, \rho) - \mathcal{P}(x, \theta)}{\rho - \theta} 
 	\left[ \begin{array}{l} Y(x) \\ Y'(x) \end{array} \right] = 
 	\left[ \begin{array}{l} 0 \\ -\Lambda(x) Y(x) \end{array} \right] + \frac{1}{2 \pi i} \int_{\ga}
 	\Biggl\{ \left[ \begin{array}{l} \vv(x, \xi) \\ \vv'(x, \xi) \end{array} \right]
 	\langle \tilde \Phi^*(x, \xi), Y(x) \rangle - \\ 
	\left[ \begin{array}{l} \Phi(x, \xi) \\ \Phi'(x, \xi) \end{array} \right]
 	\langle \tilde \vv^*(x, \xi), Y(x) \rangle
 	\Biggr\}\frac{d \xi}{(\rho - \xi)(\xi - \theta)}, 
\end{multline*}
\begin{multline} \label{sum1}
    [{\vv^*}'(x, \theta), -\vv^*(x, \theta)]\frac{\mathcal{P}(x, \rho) - \mathcal{P}(x, \theta)}{\rho - \theta}  
    \left[ \begin{array}{l} \tilde \vv(x, \rho) \\ \tilde \vv'(x, \rho) \end{array} \right] = 
    \vv^*(x, \theta) \Lambda(x) \tilde \vv(x, \rho) + \\
	\frac{1}{2 \pi i} \int_{\ga}
    \frac{(\langle \vv^*(x, \theta), \vv(x, \xi) \rangle \langle \tilde \Phi^*(x, \xi), \tilde \vv(x, \rho) \rangle -
    \langle \vv^*(x, \theta), \Phi(x, \xi) \rangle \langle \tilde \vv^*(x, \xi), \tilde \vv(x, \rho) \rangle ) d \xi}{(\rho - \xi)(\xi - \theta)}.
\end{multline}
Using \eqref{defP}, we obtain
\begin{equation} \label{sum2}
  [{\vv^*}'(x, \theta), -\vv^*(x, \theta)] \mathcal{P}(x, \rho) 
  \left[ \begin{array}{l} \tilde \vv(x, \rho) \\ \tilde \vv'(x, \rho) \end{array} \right] =
  \langle \vv^*(x, \theta), \vv(x, \rho) \rangle.
\end{equation}
By virtue of \eqref{inverseform} and \eqref{defP}, we have
$$
	\left[\begin{array}{ll}
	{\Phi^*}' & -\Phi^* \\ -{\vv^*}' & \vv^* 
	\end{array}\right] \mathcal{P} =
	\left[\begin{array}{ll}
	\tilde {\Phi^*}' & -\tilde \Phi^* \\ -\tilde {\vv^*}' & \tilde \vv^* 
	\end{array}\right]. 
$$
Hence
\begin{equation} \label{sum3}
  [{\vv^*}'(x, \theta), -\vv^*(x, \theta)] \mathcal{P}(x, \theta) 
  \left[ \begin{array}{l} \tilde \vv(x, \rho) \\ \tilde \vv'(x, \rho) \end{array} \right] =
  \langle \tilde \vv^*(x, \theta), \tilde \vv(x, \rho) \rangle.
\end{equation}

Combining \eqref{sum1}, \eqref{sum2} and \eqref{sum3} and using \eqref{defD}, we obtain
\begin{multline*}
 	D(x, \rho, \theta) - \tilde D(x, \rho, \theta) =
 	\vv^*(x, \theta) \Lambda(x) \tilde \vv(x, \rho) + \frac{1}{2 \pi i} \int_{\ga} (D(x, \xi, \theta) \tilde M^*(\xi) 
 	\tilde D(x, \rho, \xi) - \\ D(x, \xi, \theta) M(\xi) \tilde D(x, \rho, \xi)) \, d \xi.
\end{multline*}
Using \eqref{defr}, we arrive at \eqref{cont2}.
\end{proof}

Denote
$$
 	\tilde \Omega(x) := \frac{1}{2} \left( \tilde P_{+}(x) P_{-}^*(x) + \tilde P_{-}(x) P_{+}^*(x) \right),
$$
$$
 	\tilde \Lambda(x) := \frac{1}{2 i} \left( \tilde P_{-}(x) P_{+}^*(x) - \tilde P_{+}(x) P_{-}^*(x) \right)
$$
(compare with \eqref{defOmega}, \eqref{defLambda}).
Symmetrically with the relations of Lemma~\ref{lem:contour}, we have
\begin{equation} \label{cont3}
 	\tilde \Omega(x) \vv(x, \rho) = \tilde \vv(x, \rho) - \frac{1}{2 \pi i} \int_{\ga} \vv(x, \theta) r(x, \rho, \theta) \, d \theta,
\end{equation}
\begin{equation} \label{cont4}
 	r(x, \rho, \theta) - \tilde r(x, \rho, \theta) + \frac{1}{2 \pi i} \int_{\gamma} \tilde r(x, \xi, \theta) r(x, \rho, \xi) \, d \xi =
 	-\hat M(\theta) \tilde \vv^*(x, \theta) \tilde \Lambda(x) \vv(x, \rho).
\end{equation}

Consider the Banach space $C(\ga; \mathbb{C}^m)$ of row-vectors $Y(\rho) = [y_k(\rho)]^T_{k = \overline{1, m}}$
with components continuous on $\ga$ with the norm
$$
 	\| Y(\rho) \| = \max_{\rho \in \ga} \max_{k = \overline{1, m}} |y_k(\rho)|. 
$$
For each fixed $x \ge 0$, we define the linear operators $A$ and $\tilde A$ in $C(\ga; \mathbb{C}^m)$:
$$
 	Y(\rho) A = Y(\rho) - \frac{1}{2 \pi i} \int_{\ga} Y(\theta) r(x, \rho, \theta) \, d \theta, \quad \rho \in \ga,
$$
$$
 	Y(\rho) \tilde A = Y(\rho) + \frac{1}{2 \pi i} \int_{\ga} Y(\theta) \tilde r(x, \rho, \theta) \, d \theta, \quad \rho \in \ga,
$$
Since by definition these operators belong to the ring for which the elements of $C(\ga; \mathbb{C}^m)$ form a right
module, we write operators to the right of operands.

Using \eqref{defD}, \eqref{estD}, \eqref{intM} and \eqref{defr}, one can easily obtain the following estimates
$$
 	\int_{\ga} \| r(x, \rho, \theta) \| |d \theta| \le C_x, \quad \rho \in \ga,
$$
$$
 	\int_{\ga} \| r(x, \rho_1, \theta) - r(x, \rho_2, \theta) \| |d \theta| \le C_x |\rho_1 - \rho_2|, \quad
 	\rho_1, \, \rho_2 \in \ga,
$$
and similar estimates for $\tilde r(x, \rho, \theta)$. Consequently,
for each fixed $x \ge 0$, the operators $A$ and $\tilde A$ are bounded and compact in $C(\ga; \mathbb{C}^m)$.

%It follows from \eqref{cont1}, \eqref{cont2}, \eqref{cont3} and \eqref{cont4}, that
\begin{lem} For each fixed $x \ge 0$, the following relations hold.
\begin{equation} \label{Aphi}
 	\vv(x, \rho) \tilde A = \Omega(x) \vv(x, \rho), \quad \tilde \vv(x, \rho) A = \tilde \Omega(x) \vv(x, \rho),
\end{equation}
\begin{comment}
$$
 	Y(\rho) A \tilde A = Y(\rho) - \frac{1}{2 \pi i} \int_{\ga} Y(\theta) \hat M(\theta) \tilde \vv^*(x, \theta) \, d \theta \cdot
 	\Lambda(x) \tilde \vv(x, \rho),
$$
\end{comment}
\begin{equation} \label{tAA}
 	Y(\rho) \tilde A A = Y(\rho) + \frac{1}{2 \pi i} \int_{\ga} Y(\theta) \hat M(\theta) \tilde \vv^*(x, \theta) \, d \theta \cdot
 	\tilde \Lambda(x) \vv(x, \rho).
\end{equation}
\end{lem}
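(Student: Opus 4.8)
The relations \eqref{Aphi} are immediate once the operators $A$ and $\tilde A$ are recognized inside the contour identities of Lemma~\ref{lem:contour} and its symmetric counterpart, while \eqref{tAA} follows by composing the two operators and invoking \eqref{cont4}. For the first relation of \eqref{Aphi} I would apply $\tilde A$ to the row function $\rho \mapsto \vv(x, \rho)$: by definition its value at $\rho$ is $\vv(x, \rho) + \frac{1}{2 \pi i}\int_{\ga}\vv(x, \theta)\tilde r(x, \rho, \theta)\,d\theta$, which is exactly the right-hand side of \eqref{cont1}, so $\vv(x, \rho)\tilde A$ coincides with the left-hand side of \eqref{cont1}. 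The second relation is obtained symmetrically, by applying $A$ to $\rho \mapsto \tilde\vv(x, \rho)$ and matching the outcome against \eqref{cont3}. No computation is needed beyond this identification of the operators with the already established contour relations.

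For \eqref{tAA} I would take an arbitrary $Y \in C(\ga; \mathbb{C}^m)$ and expand $Y\tilde A A = (Y\tilde A)A$ by substituting the definition of $\tilde A$ into that of $A$. This produces the free term $Y(\rho)$ together with two single contour integrals and one double contour integral. After renaming the integration variable in the $A$-part and interchanging the order of integration in the double term, the entire correction collapses into $\frac{1}{2 \pi i}\int_{\ga}Y(\theta)\mathcal{K}(x, \rho, \theta)\,d\theta$ with kernel $\mathcal{K}(x, \rho, \theta) = \tilde r(x, \rho, \theta) - r(x, \rho, \theta) - \frac{1}{2 \pi i}\int_{\ga}\tilde r(x, \xi, \theta) r(x, \rho, \xi)\,d\xi$. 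By \eqref{cont4} this kernel equals $\hat M(\theta)\tilde\vv^*(x, \theta)\tilde\Lambda(x)\vv(x, \rho)$; since the factor $\tilde\Lambda(x)\vv(x, \rho)$ does not depend on $\theta$, it may be pulled to the right of the integral, which is precisely \eqref{tAA}.

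The only step requiring genuine justification is the reorganization of the double integral, namely interchanging the $\xi$- and $\theta$-integrations and recognizing the inner $\xi$-integral as the one appearing in \eqref{cont4}. This is licensed, for each fixed $x$, by the uniform bounds $\int_{\ga}\|r(x, \rho, \theta)\|\,|d\theta| \le C_x$ together with its analogue for $\tilde r$, which were derived just before the lemma from \eqref{estD} and \eqref{intM} and make all iterated integrals absolutely convergent, so that Fubini's theorem applies. The remainder is purely formal bookkeeping: one keeps the three spectral variables $\rho$ (free), $\theta$ (outer) and $\xi$ (inner) distinct and applies \eqref{cont4} with $\theta$ held fixed in its second argument.
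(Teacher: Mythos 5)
Your proof is correct and takes essentially the same route as the paper, whose own proof is exactly this: \eqref{Aphi} is read off by identifying the operators with the contour relations \eqref{cont1} and \eqref{cont3}, and \eqref{tAA} by expanding the composition $Y \tilde A A$ into the kernel $\tilde r - r - \frac{1}{2\pi i}\int_{\ga} \tilde r(x,\xi,\theta) r(x,\rho,\xi)\, d\xi$ and invoking \eqref{cont4}, with your Fubini justification (via the bound $\int_{\ga} \| r(x,\rho,\theta)\| \, |d\theta| \le C_x$) making explicit what the paper leaves implicit. Note only that what your identification actually yields for the first relation is $\vv(x,\rho)\tilde A = \Omega(x)\tilde\vv(x,\rho)$, i.e.\ the left-hand side of \eqref{cont1}; the missing tilde in the printed statement of \eqref{Aphi} (and likewise the mismatch when matching $\tilde\vv(x,\rho)A$ against \eqref{cont3} as printed, whose integrand should read $\tilde\vv(x,\theta)$) is a typo in the paper rather than a defect of your argument.
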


\begin{proof}
The relations \eqref{Aphi} follow immediately from \eqref{cont1} and \eqref{cont3}. 
Since 
$$
Y(\rho) A \tilde A = Y(\rho) + \frac{1}{2 \pi i} \int_{\ga} Y(\theta) \left( \tilde r(x, \rho, \theta) - r(x, \rho, \theta) - 
\int_{\ga} \tilde r(x, \xi, \theta) r(x, \rho, \xi) \, d \xi\right) \, d \theta,
$$
the relation \eqref{tAA} follows from \eqref{cont4}.
\end{proof}

\medskip

{\it Remark.} For the matrix Sturm-Liouville operator (when $Q_1(x) \equiv 0$),
we have $\Omega(x) = I$, $\Lambda(x) = 0$. Therefore, the operators $A$ and $\tilde A$ are inverses of each other. 

\medskip

\begin{thm} \label{thm:homo}
For each fixed $x \ge 0$, the subspace of solutions of the homogeneous equation 
\begin{equation} \label{homo}
Y(\rho) \tilde A = 0
\end{equation}
in the Banach space $C(\ga; \mathbb{C}^m)$ has dimension $m - \mbox{rank}\, \Omega(x)$.
In the particular case $\det \Omega(x) \ne 0$, equation~\eqref{homo} has only the trivial solution. 
\end{thm}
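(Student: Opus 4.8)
The plan is to identify the kernel of $\tilde A$ explicitly with the left null space of the matrix $\Omega(x)$, thereby reducing the assertion to elementary linear algebra. The two relations furnished by the preceding lemma, namely $\vv(x,\rho)\tilde A = \Omega(x)\vv(x,\rho)$ from \eqref{Aphi} and the composition formula \eqref{tAA}, are precisely the tools needed, so no new analytic machinery should be required beyond the asymptotics of Lemma~\ref{lem:asympt}.

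First I would show that every solution of \eqref{homo} is a constant left multiple of $\vv(x,\cdot)$. Indeed, if $Y(\rho)\tilde A = 0$, then $Y(\rho)\tilde A A = 0$ as well, and substituting this into \eqref{tAA} gives $Y(\rho) = b\,\vv(x,\rho)$ with the constant row vector $b := -\big(\tfrac{1}{2\pi i}\int_\ga Y(\theta)\hat M(\theta)\tilde\vv^*(x,\theta)\,d\theta\big)\tilde\Lambda(x)\in\mathbb{C}^m$. Thus $\ker\tilde A\subseteq\{\,b\,\vv(x,\cdot):b\in\mathbb{C}^m\,\}$, so the kernel is automatically finite dimensional.

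Next I would prove that the correspondence $b\mapsto b\,\vv(x,\cdot)$ is injective and determine which $b$ land in the kernel. For injectivity, note that $\vv(x,\cdot)$ is entire in $\rho$ for fixed $x$, so $c\,\vv(x,\rho)\equiv0$ on the contour $\ga$ (which contains a ray) forces $c\,\vv(x,\rho)\equiv0$ on all of $\mathbb{C}$; letting $\rho=iR$, $R\to+\infty$ in the representation of Lemma~\ref{lem:asympt} and using the invertibility of $P_{+}(x)$ (Lemma~\ref{lem:P}) together with $\det(I+h_1)\ne0$ yields $c=0$. Hence $b\mapsto b\,\vv(x,\cdot)$ is injective. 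By linearity and \eqref{Aphi} we have $(b\,\vv(x,\cdot))\tilde A = b\,\Omega(x)\vv(x,\rho)$, so $b\,\vv(x,\cdot)\in\ker\tilde A$ if and only if $b\,\Omega(x)\vv(x,\rho)\equiv0$, which by the same independence is equivalent to $b\,\Omega(x)=0$.

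Combining the two steps, the map $b\mapsto b\,\vv(x,\cdot)$ is a linear isomorphism from the left null space $\{b\in\mathbb{C}^m:b\,\Omega(x)=0\}$ onto $\ker\tilde A$, whence $\dim\ker\tilde A = m-\mbox{rank}\,\Omega(x)$, and the particular case $\det\Omega(x)\ne0$ gives only the trivial solution. The main obstacle is the independence argument of the third paragraph: one must pass from vanishing on the contour to vanishing on $\mathbb{C}$ and then extract the leading coefficient from the two-exponential asymptotics without the error terms interfering, for which the choice $\rho=iR$ (so that one exponential dominates) is the clean route. Everything else is a direct application of the algebraic identities \eqref{Aphi} and \eqref{tAA}.
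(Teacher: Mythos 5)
Your proposal is correct and takes essentially the same route as the paper's proof: apply $A$ and use \eqref{tAA} to show every solution of \eqref{homo} is of the form $b\,\vv(x,\cdot)$, then use \eqref{Aphi} to reduce kernel membership to the linear-algebra condition $b\,\Omega(x)=0$. The only difference is that you explicitly justify the linear independence of the rows of $\vv(x,\cdot)$ on $\ga$ (via analytic continuation and the $\rho = iR$ asymptotics of Lemma~\ref{lem:asympt}), a step the paper compresses into ``it is clear''; this is a fleshing-out of the same argument, not a different approach.
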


\begin{proof}
Let $Y(\rho)$ be a solution of \eqref{homo}. Then by \eqref{tAA} 
$$
 	Y(\rho) \tilde A A = Y(\rho) - K(x) \vv(x, \rho) = 0,
$$
where $K(x)$ is a row vector.
Using \eqref{Aphi}, we get
$$
 	Y(\rho) \tilde A = K(x) \vv(x, \rho) \tilde A = K(x) \Omega(x) \tilde \vv(x, \rho) = 0.
$$
Let $\mathcal{K}(x)$ be the space of solutions of this equation for each fixed $x \ge 0$.
It is clear that $\dim \mathcal{K}(x) = m - \mbox{rank} \, \Omega(x)$, and $Y(\rho)$ is
a solution of \eqref{homo} if and only if $Y(\rho) = K(x) \vv(x, \rho)$, $K(x) \in \mathcal{K}(x)$,
this yields the assertion of the lemma.
\end{proof}

We assume that $\det \Omega(x) \ne 0$. For example, this holds when the matrices $Q_1(x)$
and $\tilde Q_1(x)$ are Hermitian ($Q_1(x) = - Q_1^{\dagger}(x)$, where the sign $\dagger$
denotes the conjugate transposition). 

For each fixed $x \ge 0$ we consider in $C(\ga, \mathbb{C}^{m \times m})$
the linear equation
\begin{equation} \label{main}
 	\tilde \vv(x, \rho) = z(x, \rho) + \frac{1}{2 \pi i} \int_{\ga} z(x, \theta) \tilde r(x, \rho, \theta) \, d \theta
\end{equation}
with respect to $z(x, \rho)$. Equation \eqref{main} is called {\it the main equation} of Inverse Problem 1.
Eq. \eqref{Aphi} and Theorem~\ref{thm:homo} yield

\begin{thm} \label{thm:main}
For each fixed $x \ge 0$, such that $\det \Omega(x) \ne 0$,
equation \eqref{main} has a unique solution $z(x, \rho) = (\Omega(x))^{-1} \vv(x, \rho)$. 	
\end{thm}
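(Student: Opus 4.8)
The plan is to recognize that the main equation \eqref{main} is nothing but the operator identity $z(x,\cdot)\,\tilde A = \tilde\vv(x,\cdot)$, where $\tilde A$ is the bounded, compact-perturbation-of-identity operator introduced just before this theorem, acting row-by-row on matrix-valued functions of $\rho$ on $\ga$. Once this reformulation is in place, the two assertions --- existence of the stated solution and its uniqueness --- follow from the operator relations \eqref{Aphi} and from Theorem~\ref{thm:homo}, respectively, so that essentially all the analytic work has already been done in Lemma~\ref{lem:contour}.

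For existence I would simply verify that $z = (\Omega(x))^{-1}\vv(x,\rho)$ solves $z\tilde A = \tilde\vv$. The key observation is that the factor $(\Omega(x))^{-1}$ is a constant matrix for each fixed $x$ and therefore commutes with the action of $\tilde A$, which consists of right multiplication by $\tilde r$ followed by integration over $\ga$. Hence $((\Omega(x))^{-1}\vv)\tilde A = (\Omega(x))^{-1}(\vv\,\tilde A)$, and by \eqref{Aphi} (equivalently \eqref{cont1}) one has $\vv\,\tilde A = \Omega(x)\tilde\vv$, so the product collapses to $\tilde\vv(x,\rho)$, as required. This step is purely formal once \eqref{Aphi} is granted.

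For uniqueness I would argue that any two solutions $z_1,z_2$ of \eqref{main} have a difference $z_1-z_2$ satisfying the homogeneous equation $(z_1-z_2)\tilde A = 0$. Read row by row, each row of $z_1-z_2$ then lies in the solution space of \eqref{homo} in $C(\ga;\mathbb{C}^m)$. By Theorem~\ref{thm:homo}, under the standing hypothesis $\det\Omega(x)\ne 0$ this space is trivial, so $z_1 = z_2$. Alternatively, since $\tilde A = I + (\text{compact})$, the Fredholm alternative together with the trivial kernel yields invertibility of $\tilde A$ and hence existence and uniqueness simultaneously; but existence is cleaner to obtain from the direct substitution above.

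I expect no serious obstacle here, since the heavy lifting is already contained in Lemma~\ref{lem:contour}, the relations \eqref{Aphi}, and Theorem~\ref{thm:homo}. The only points requiring care are bookkeeping ones: first, that $\tilde A$, defined on row vectors, extends to matrix-valued functions by acting on each row, so that Theorem~\ref{thm:homo} may be applied component-wise to the difference $z_1-z_2$; and second, that the constant matrix $(\Omega(x))^{-1}$ genuinely commutes past the integral operator $\tilde A$, which is precisely what makes the direct substitution identify $\Omega^{-1}\vv$ as the solution.
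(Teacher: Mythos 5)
Your proposal is correct and follows essentially the same route as the paper, which derives Theorem~\ref{thm:main} directly from the relation $\vv(x,\rho)\tilde A = \Omega(x)\tilde\vv(x,\rho)$ in \eqref{Aphi} (giving existence of $z=(\Omega(x))^{-1}\vv(x,\rho)$, via the trivial commutation of the constant matrix $(\Omega(x))^{-1}$ past the right-acting integral operator) and from Theorem~\ref{thm:homo} (giving uniqueness, applied row by row to the difference of two solutions). Your remarks on the row-wise extension of $\tilde A$ to matrix-valued functions and on the Fredholm alternative are consistent with the paper's framework and introduce no gap.
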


\bigskip

{\large \bf 5. Constructive solution for Inverse Problem 1}

\bigskip

In this section, using the main equation \eqref{main} and Theorem~\ref{thm:main},
we provide a constructive algorithm for the solution of Inverse Problem~1.

In a similar way to Lemma~\ref{lem:contour}, it can be proved that for $\rho \in J_{\ga}$,
$$
 	\Omega(x) \tilde \Phi(x, \rho) = \Phi(x, \rho) + \frac{1}{2 \pi i} \int_{\ga} \vv(x, \rho) \hat M(\theta)
 	\frac{\langle \tilde \vv^*(x, \theta), \tilde \Phi(x, \rho) \rangle}{\rho - \theta} \, d \theta.
$$
Therefore, if $\det \Omega(x) \ne 0$ and $z(x, \rho)$ is already known, we can build the matrix function 
$w(x, \rho) := (\Omega(x))^{-1} \Phi(x, \rho)$ by the formula
\begin{equation} \label{defw}
 	w(x, \rho) = \tilde \Phi(x, \rho) - \frac{1}{2 \pi i} \int_{\ga} z(x, \theta) \bar M(\theta) 
 	\frac{\langle \tilde \vv^*(x, \theta), \tilde \Phi(x, \rho) \rangle}{\rho -\theta} \, d \theta.
\end{equation}

Analogously, we construct the main equation of the inverse problem for the pencil $L^* = (\ell^*_{\rho}, U^*_{\rho})$
(see definitions \eqref{lU*}):
\begin{equation} \label{main*}
 	\vv^*(x, \rho) = z^*(x, \rho) + \frac{1}{2 \pi i} \int_{\ga} \tilde D^*(x, \rho, \theta) \hat M(\theta) z^*(x, \theta) \, d \theta.
\end{equation}
Here we have used the relation $\hat M^*(\rho) \equiv \hat M(\rho)$, following from \eqref{eqMM*}.
Denote
$$
 	\Omega^*(x) := \frac{1}{2} \left( \tilde P_{+}(x) P_{-}^*(x) + \tilde P_{-}(x) P_{+}^*(x) \right) = \tilde \Omega(x).
$$
Suppose $\det \Omega^*(x) \ne 0$ for all $x \ge 0$. 
Then eq. \eqref{main*} has the unique solution $z^*(x, \rho) = \vv^*(x, \rho) (\Omega^*(x))^{-1}$,
and we also construct
\begin{equation} \label{defw*}
 	w^*(x, \rho) = \Phi^*(x, \rho) (\Omega^*(x))^{-1} = \tilde \Phi^*(x, \rho) - \frac{1}{2 \pi i} \int_{\ga}
 	\frac{\langle \tilde \Phi^*(x, \rho), \tilde \vv(x, \theta)\rangle}{\rho - \theta} \hat M(\theta) z^*(x, \theta) \, d \theta.
\end{equation}

Using \eqref{inverseform}, we get
$$
 	0 = \vv \Phi^* - \Phi \vv^* = \Omega (z w^* - w z^*) \Omega^*.
$$
$$
 	I = \vv {\Phi^*}' - \Phi {\vv^*}' = \Omega (z w^* - w z^*) {\Omega^*}' + \Omega (z {w^*}' - w {z^*}') \Omega^*.
$$
Therefore, 
$$
 	z w^* - w z^* = 0, 		
$$
\begin{equation} \label{Omega2}
 	(z {w^*}' - w {z^*}')^{-1} = \Omega^* \Omega. 	
\end{equation}

It follows from \eqref{wronphi} that
$$
 	\langle z^* \Omega^*, \Omega z \rangle = 0.
$$
Consequently,
\begin{equation} \label{Omega3}
  	z^* ({\Omega^*}' \Omega - \Omega^* \Omega') z = z^* \Omega^* \Omega z' - {z^*}' \Omega^* \Omega z.
\end{equation}
If the functions $z$, $z^*$, $w$ and $w^*$ are known, one can find $\Omega^* \Omega$ from \eqref{Omega2}.
Then the right-hand side of \eqref{Omega3} is known, and we can calculate ${\Omega^*}' \Omega - \Omega^* \Omega'$.
Differentiating $\Omega^* \Omega$, we find ${\Omega^*}' \Omega + \Omega^* \Omega'$. Then we can find
$\Omega^* \Omega'$ and calculate 
$$
	a(x) := \Omega^{-1} \Omega' = (\Omega^* \Omega)^{-1} \Omega^* \Omega'.
$$
Solving the Cauchy problem
$$
 	 \Omega'(x) = a(x) \Omega(x), \quad \Omega(0) = I,
$$
we get $\Omega(x)$ and then can find $\vv(x, \rho)$.

In the particular case when $Q_1(x)$ and $\tilde Q_1(x)$ are skew--Hermitian, we see from \eqref{cauchyP} and 
\eqref{cauchyP*} that $P_{+}^* = P_{-}^{\dagger}$ and $P_{-}^* = P_{+}^{\dagger}$. Hence $\Omega^* = \Omega^{\dagger}$,
and we can find $\Omega$ as a matrix square root from $\Omega^* \Omega$.

We arrive at the following algorithm for the solution of Inverse Problem 1.

\medskip

{\bf Algorithm 1.} Let the Weyl function of the pencil $L$ be given.
\begin{enumerate}
\item Find $h_s$, $s = 0, 1$, and $Q_1(0)$ from the asymptotics in Lemma~\ref{lem:asymptM}.
\item Choose the model pencil $\tilde L$ satisfying \eqref{intM}.
\item Construct the matrix functions $\tilde \vv(x, \rho)$,
$\tilde \Phi(x, \rho)$, $\tilde M(\rho)$, $\tilde r(x, \rho, \theta)$ and the similar functions for $\tilde L^*$. 
\item Find $z(x, \rho)$ and $z^*(x, \rho)$, solving the main equations \eqref{main} and \eqref{main*}.
\item Construct $w(x, \rho)$ and $w^*(x, \rho)$ via \eqref{defw} and \eqref{defw*}, then get $\Omega^*(x) \Omega(x)$ 
via \eqref{Omega2}.
\item Find $\Omega(x)$ and $\vv(x, \rho) = \Omega(x) z(x, \rho)$.	
\item Substituting $\vv(x, \rho)$ into \eqref{eqv}, obtain
$Q_s(x)$, $x \ge 0$.
\end{enumerate}

Note that Algorithm 1 works only for the case $\det \Omega(x) \ne 0$ and $\det \Omega^*(x) \ne 0$
for all $x \ge 0$. 
Further we provide a modification of this algorithm for the general case.
We will write that $\tilde Q_1$ belongs to the class $\mathcal{Q}(\de)$, $\de \ge 0$,
if the following conditions hold
\begin{enumerate}
\item $\tilde Q_1(x) = Q_1(x)$, $x \in [0, \de]$;
\item $\| \tilde Q_1(x) \| \le \| Q_1(\de) \| $, $x \in (\de, \iy)$;
\item $\int_{\de}^{\iy} \| \tilde Q_1(x) \| \, dx < \| Q_1(\de) \|$.
\end{enumerate}

{\bf Algorithm 2.} Let the Weyl function of the pencil $L$ be given.
\begin{enumerate}
\item Find $h_s$, $s = 0, 1$, and $Q_1(0)$ from the asymptotics in Lemma~\ref{lem:asymptM}.
\item Put $k := 1$, $\de_0 := 0$.                                     
\item We assume that for $x \in [0, \de_{k - 1}]$ the potential $Q_1(x)$
is already known. Choose the model pencil $\tilde L$ satisfying \eqref{intM}
and with $\tilde Q_1$ belonging to $\mathcal{Q}(\de_{k - 1})$.
\item Step 3 of Algorithm 1.
\item Choose the largest $\de_k$, $\de_{k - 1} < \de_k \le \iy$, such that the main equations \eqref{main} and \eqref{main*} have unique solutions
for $x \in (\de_{k-1}, \de_{k})$, find these solutions.
\item Implement steps 5--7 of Algorithm 1 for $x \in (\de_{k - 1}, \de_k)$.
\item If $\de_k = \iy$, then terminate the algorithm, otherwise put $k := k + 1$ and go to step 3.             
\end{enumerate}

\begin{thm}
Algorithm 2 terminates after a finite number of steps.
\end{thm}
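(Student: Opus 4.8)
The plan is to prove two quantitative facts about the matrix $\Omega(x)$ built from the true potential $Q_1$ and the model potential $\tilde Q_1$ chosen at step $k$: (a) $\det\Omega(x)\neq 0$ throughout an interval $(\de_{k-1},\de_{k-1}+\eta_0)$ whose length $\eta_0>0$ does \emph{not} depend on $k$; and (b) if $\de_{k-1}$ is large enough, then $\det\Omega(x)\neq 0$ for \emph{all} $x>\de_{k-1}$. By Theorem~\ref{thm:main} the solvability of the main equation hinges on exactly these determinants (and, symmetrically, on those of $\Omega^*=\tilde\Omega$). Fact (a) shows that every finite step increases the left endpoint by at least $\eta_0$; fact (b) shows that once the endpoint exceeds a threshold $\de^*$ the next step reaches $\iy$ and the algorithm stops. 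Since the endpoints grow in increments of at least $\eta_0$, at most $\lceil \de^*/\eta_0\rceil$ finite steps can occur before $\de_{k-1}\ge\de^*$, which yields finiteness.

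First I would record the initial data at $x=\de_{k-1}$. Because $\tilde Q_1=Q_1$ on $[0,\de_{k-1}]$, the relations \eqref{relP} of Lemma~\ref{lem:P} give $\tilde P^*_{\pm}=P^{-1}_{\pm}$ on that segment, so from \eqref{defOmega}, \eqref{defLambda} we get $\Omega(\de_{k-1})=I$, $\Lambda(\de_{k-1})=0$ (and likewise $\Omega^*(\de_{k-1})=I$). Differentiating \eqref{defOmega}, \eqref{defLambda} and using \eqref{cauchyP}, \eqref{cauchyP*} yields the coupled system
\begin{equation*}
\Omega'=i(\Lambda\tilde Q_1-Q_1\Lambda),\qquad \Lambda'=-i(\Omega\tilde Q_1-Q_1\Omega).
\end{equation*}
Writing $p(x)=\|\Omega(x)-I\|$, $\ell(x)=\|\Lambda(x)\|$, $g=\|Q_1\|+\|\tilde Q_1\|$ and $G(x)=\int_{\de_{k-1}}^x g\,dt$, integrating the system from $\de_{k-1}$ and using $\Omega\tilde Q_1-Q_1\Omega=(\tilde Q_1-Q_1)+(\Omega-I)\tilde Q_1-Q_1(\Omega-I)$ together with $\int_{\de_{k-1}}^x\|\tilde Q_1-Q_1\|\,dt\le G(x)$ gives $p+\ell\le G(x)+\int_{\de_{k-1}}^x(p+\ell)g\,dt$. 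Gronwall's lemma then produces the key estimate $p(x)\le G(x)\,e^{G(x)}$, so $\det\Omega(x)\neq0$ whenever $G(x)\le\tfrac12$ (since then $\|\Omega-I\|<1$).

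It remains to bound $G$. The potential $Q_1$ is absolutely continuous with $Q_1,Q_1'\in L((0,\iy);\mathbb{C}^{m\times m})$, hence $\|Q_1\|_{\iy}\le C<\iy$ and $\|Q_1(x)\|\to0$ as $x\to\iy$. By the second defining condition of $\mathcal{Q}(\de_{k-1})$ we have $\|\tilde Q_1(x)\|\le\|Q_1(\de_{k-1})\|\le C$ for $x>\de_{k-1}$, so $g\le 2C$ everywhere and $G(\de_{k-1}+\eta)\le 2C\eta$. Choosing $\eta_0=1/(4C)$ makes $G\le\tfrac12$ on $(\de_{k-1},\de_{k-1}+\eta_0)$, which proves fact (a) uniformly in $k$. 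For fact (b), the third condition of $\mathcal{Q}(\de_{k-1})$ gives $\int_{\de_{k-1}}^{\iy}\|\tilde Q_1\|\,dx<\|Q_1(\de_{k-1})\|$, whence
\begin{equation*}
G(\iy)=\int_{\de_{k-1}}^{\iy}\!\|Q_1\|\,dx+\int_{\de_{k-1}}^{\iy}\!\|\tilde Q_1\|\,dx<\int_{\de_{k-1}}^{\iy}\!\|Q_1\|\,dx+\|Q_1(\de_{k-1})\|\xrightarrow[\de_{k-1}\to\iy]{}0,
\end{equation*}
using the $L^1$-tail of $Q_1$ and $\|Q_1(\de_{k-1})\|\to0$. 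Hence there is $\de^*$ with $G(\iy)\le\tfrac12$ as soon as $\de_{k-1}\ge\de^*$, and then $\det\Omega(x)\neq0$ for all $x>\de_{k-1}$.

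The main obstacle is keeping the step-size bound uniform in $k$: one must verify that $\Omega(\de_{k-1})=I$ at the start of every step and that the constant $C$ depends only on the fixed true potential $Q_1$, not on the model $\tilde Q_1$ (which changes with $k$); both follow from $\tilde Q_1=Q_1$ on $[0,\de_{k-1}]$ and $\|\tilde Q_1\|\le\|Q_1(\de_{k-1})\|\le C$. The identical argument applied to $\Omega^*=\tilde\Omega$ (with the roles of $Q_1$ and $\tilde Q_1$ interchanged in the commutators, which leaves the norm bounds unchanged) guarantees $\det\Omega^*(x)\neq0$ on the same intervals, so the main equations for both $L$ and $L^*$ are simultaneously uniquely solvable there, completing the argument.
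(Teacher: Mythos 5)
Your proof is correct, and it establishes the same two quantitative facts on which the paper's own proof rests: since $\tilde Q_1 = Q_1$ on $[0,\de_{k-1}]$, one has $\Omega(\de_{k-1}) = I$ (and $\Lambda(\de_{k-1}) = 0$), and then $\|\Omega(x)-I\|$ stays below $1$ both on an interval whose length is bounded below uniformly in $k$ and, once $\de_{k-1}$ exceeds a threshold, on the entire tail. The implementation differs in two respects. First, the paper obtains the determinant bounds from the explicit Volterra representations of $P_{\pm}$ and $\tilde P^*_{\mp}$, writing $\Omega(x) = I + B_{\de}(x)$ and proving the two estimates \eqref{estB1} and \eqref{estB2} with constants $K(L)$ depending only on the true pencil; you instead close a Gronwall loop on the coupled commutator system $\Omega' = i(\Lambda \tilde Q_1 - Q_1 \Lambda)$, $\Lambda' = -i(\Omega \tilde Q_1 - Q_1 \Omega)$ (the first equation already appears in the paper's uniqueness proof, and your second is easily verified from \eqref{cauchyP}, \eqref{cauchyP*}), which yields $\|\Omega - I\| \le G e^{G}$ with $G(x) = \int_{\de_{k-1}}^x (\|Q_1\| + \|\tilde Q_1\|)\, dt$ --- an equivalent bound reached without writing $B_{\de}$ down, and note $\tfrac12 e^{1/2} < 1$ so your criterion $G \le \tfrac12$ indeed suffices. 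Second, the bookkeeping is organized differently: the paper constructs an a priori partition $0 = x_0 < x_1 < \dots < x_s = \iy$ (unknown, since it depends on $Q_1$) with $\|B_{x_{k-1}}(x)\| < 1/2$ on each piece and shows $\de_k \ge x_k$ by induction, whereas you prove directly that every step advances the endpoint by at least $\eta_0 = 1/(4C)$ and that any step starting beyond $\de^*$ reaches $\iy$, giving the explicit bound $\lceil \de^*/\eta_0 \rceil + 1$ on the number of steps --- the same counting, phrased more quantitatively. You also correctly flag and verify the two points where the argument could silently fail, namely that the constant $C$ depends only on $L$ and not on $k$ or on the choice of $\tilde Q_1 \in \mathcal{Q}(\de_{k-1})$ (here condition 2 of $\mathcal{Q}(\de)$ is exactly what is needed), and that the symmetric condition $\det \Omega^*(x) \ne 0$ with $\Omega^* = \tilde \Omega$ holds on the same intervals so that both \eqref{main} and \eqref{main*} are uniquely solvable there; both points are likewise used in the paper's proof.
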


\begin{proof}
1. Consider the pencil $\tilde L$ such that $\tilde Q_1 \in \mathcal{Q}(\de)$, $\de \ge 0$.

We have $P_{\pm}(x) = \tilde P_{\pm}(x)$ and $P^*_{\pm}(x) = \tilde P^*_{\pm}(x)$ for $x \in [0, \de]$.
%Using Lemma~\ref{lem:P} and \eqref{defOmega}, we get $\Omega(x) = I$, $x \in [0, \de]$. 
For $x > \de$, the following integral representations are valid
$$
 	P_{\pm}(x) = P_{\pm}(\de) \pm \int_{\de}^{x} Q_1(t) P_{\pm}(t) \, dt,
$$  
$$
 	\tilde P_{\pm}^*(x) = \tilde P_{\pm}^*(\de) \pm \int_{\de}^x \tilde P_{\pm}^*(t) \tilde Q_1(t) \, dt.
$$
Since $\tilde P_{\pm}^*(\de) = P_{\pm}^*(\de) = (P_{\mp}(\de))^{-1}$, using \eqref{defOmega}, we obtain
$$
 	\Omega(x) = I + B_{\de}(x) = I + \frac{1}{2}(B_{+}(x) + B_{-}(x)), 
$$
\begin{multline*}
 	B_{\pm}(x) := \pm \int_{\de}^x Q_1(t) P_{\pm}(t) \, dt \tilde P^*_{\mp}(\de) 
 	\mp P_{\pm}(\de) \int_{\de}^x \tilde P^*_{\mp}(t) \tilde Q_1(t) \, dt \\ -
 	\int_{\de}^x Q_1(t) P_{\pm}(t) \, dt \cdot \int_{\de}^x \tilde P^*_{\mp}(t) \tilde Q_1(t) \, dt.
\end{multline*}

Let us denote by $K(L)$ different constants that depend on the pencil $L$ (namely, on the unknown potential $Q_1$)
and do not depend on $\de \ge 0$ and on the choice of $\tilde Q_1 \in \mathcal{Q}(\de)$.
Then we have 
$$
	\int_0^{\iy} \| Q_1(x) \| \, dx < K(L), \quad \max_{x \ge 0} \| Q_1(x) \| < K(L).
$$
For every $\de \ge 0$ and $\tilde Q_1 \in \mathcal{Q}_{\de}$,
$$
	\int_0^{\iy} \| \tilde Q_1(x) \| \, dx < K(L), \quad \max_{x \ge 0} \| \tilde Q_1(x) \| < K(L).
$$
By virtue of Lemma~\ref{lem:P},
$$
 	\| P_{\pm}(x) \|, \, \| P^*_{\pm}(x) \| < K(L).
$$
Therefore, one can easily obtain two estimates
\begin{equation} \label{estB1}
 	\| B_{\de}(x) \| \le K(L) \left[ \| Q_1(\de) \| + \int_{\de}^x \| Q_1(t) \| \, dt + 
 	 \| Q_1(\de) \|\int_{\de}^x \| Q_1(t) \| \, dt \right],
\end{equation}
\begin{equation} \label{estB2}
   \| B_{\de}(x) \| \le K(L) (x - \de).
\end{equation}

2. Let us construct a partition $0 = x_0 < x_1 < x_2 < \dots < x_{s - 1} < x_{s} = \infty$
such that for each $k$ from $1$ to $s$, if we put $\de = x_{k-1}$ in step 1 of the proof and construct $\tilde Q_1(x)$,
we get $\| B_{\de_k}(x) \| < 1/2$, $x \in (x_{k-1}, x_k)$. Hence $\det \Omega(x) \ne 0$ for $x \in (x_{k - 1}, x_k)$.
For $\Omega^*(x)$ the proof is similar. Hence
the main equations \eqref{main} and \eqref{main*} are uniquely solvable in this interval.

First, it follows from \eqref{estB1}, that
$$
 	\lim_{\de \to \infty} \max_{x \ge \de} \| B_{\de}(x) \| = 0.
$$
Therefore we can choose $x_*$ such that for every $x$ in $(x_{*}, \iy)$ we have $\| B_{x_*}(x) \| < 1/2$.

Further, we divide the segment $[0, x_*]$ into sufficiently small segments $[x_{k - 1}, x_k]$, $k = \overline{1, s-1}$, 
such that $\| B_{x_{k-1}}(x) \| < 1/2$ for $x \in (x_{k - 1}, x_k)$
by virtue of \eqref{estB2}. 

3. The partition $\{ x_k \}$, constructed at the previous step, is independent of the particular choice of $\tilde Q_1$,
but it depends on the pencil $L$ which is not known a priori. Therefore the partition $\{ x_k \}$ is unknown,
and we can not directly use it in Algorithm~2. However, we can establish 
connection between $\{ x_k \}$ and $ \{ \de_k \}$.
It is easy to show by induction, that $\de_k \ge x_k$ for all $k = \overline{1, s}$. Consequently, 
the number of steps in Algortihm~2 is finite.

\end{proof}

\medskip

{\large \bf Appendix}

\medskip

Here we provide the proof of Theorem~\ref{thm:FSS}. By {\it Step 1}, we reduce the differential 
equation $\ell_{\rho}(Y) = 0$ to an integral one.
Then we confine ourselves to the case $\rho \in \Pi_+$. 
We construct the Jost-type solution $E_{+}(x, \rho)$ by {\it Step 2} and the Birkhoff-type solution
$E_{-}(x, \rho)$ by {\it Step 3}. Some parts of {\it Step 3} are analoguous to {\it Step 2}, so 
they are described in short. By {\it Step 4}, we prove that the constructed system
of solutions $\{ E_{+}(x, \rho), E_{-}(x, \rho) \}$
is linearly independed for each fixed $\rho \ne 0$. The case $\rho \in \Pi_{-}$ can be considered in a similar way.
In this case, $E_{-}(x, \rho)$ is the Jost-type solution and $E_{+}(x, \rho)$ is Birkhoff-type. 

\medskip

\begin{proof}[Proof of Theorem~\ref{thm:FSS}]

{\bf Step 1.} 
Consider the matrix functions $E_{\pm}^0(x, \rho) := \exp(\pm i \rho x) P_{\mp}(x)$, where
$P_{\pm}(x)$ are defined by \eqref{cauchyP}. It is easy to check, that $E^0_{\pm}(x)$ 
form a fundamental system of solutions for the differential equation
$$
Y'' + Q'_1(x) (i \rho I - Q_1(x))^{-1} Y' + (\rho^2 I + 2 i \rho Q_1(x) - Q_1^2(x)) Y = 0.
$$
Rewrite equation $\ell_{\rho}(Y) = 0$ in the form
\begin{gather} \label{eqvrewr}
Y'' + Q'_1(x) (i \rho I - Q_1(x))^{-1} Y' + (\rho^2 I + 2 i \rho Q_1(x) - Q_1^2(x)) Y = F(x, \rho, Y), \\
\label{defF}
F(x, \rho, Y) := Q'_1(x) (i \rho I - Q_1(x))^{-1} Y' - (Q_1^2(x) + Q_0(x))Y.
\end{gather}
Apply the method of variation of parameters to this equation. Every solution of \eqref{eqvrewr} can be represented in 
the form 
$$
	Y(x, \rho) = E_{+}^0(x, \rho) A(x, \rho) + E_{-}^0(x, \rho) B(x, \rho),
$$
where coefficient matrices $A(x, \rho)$ and $B(x, \rho)$ satisfy the system
$$
 	\left[\begin{array}{cc} E_+^0(x, \rho) & E_-^0 (x, \rho) \\ {E_+^0}'(x, \rho) & {E_-^0}'(x, \rho) \end{array}\right] \cdot
 	\left[\begin{array}{c} A'(x, \rho) \\ B'(x, \rho) \end{array}\right] = 
 	\left[\begin{array}{c} 0 \\ F(x, \rho, Y) \end{array}\right].
$$
Using Lemma~\ref{lem:P}, we find the inverse matrix
$$
 	\left[\begin{array}{cc} E_+^0(x, \rho) & E_-^0 (x, \rho) \\ {E_+^0}'(x, \rho) & {E_-^0}'(x, \rho) \end{array}\right]^{-1} =
 	\frac{1}{2} \left[\begin{array}{cc} \exp(-i \rho x) P_{+}^*(x) & \exp(-i \rho x) P_{+}^*(x) (i \rho I - Q_1(x))^{-1} \\
 	                                    \exp(i \rho x) P_{-}^*(x) & -\exp(i \rho x) P_{-}^*(x) (i \rho I - Q_1(x))^{-1}
 	                  \end{array}\right].                       	
$$
Consequently,
$$
 	A'(x, \rho) = \frac{1}{2} \exp(-i \rho x) P_{+}^*(x)(i \rho I - Q_1(x))^{-1} F(x, \rho, Y),
$$
$$
 	B'(x, \rho) = -\frac{1}{2} \exp(i \rho x) P_{-}^*(x) (i \rho I - Q_1(x))^{-1} F(x, \rho, Y),
$$
\begin{multline} \label{inteq}
Y(x, \rho) = \exp(i \rho x) P_{-}(x) A(a, \rho) + \exp(-i \rho x) P_{+}(x) B(a, \rho) + \frac{1}{2}
\int_a^x \bigl\{ \exp(i \rho (x - t)) P_{-}(x) P_{+}^*(t) \\ - \exp(i \rho (t - x)) P_{+}^*(x) P_{-}^*(t)\bigr\} (i \rho I - Q_1(t))^{-1}
F(t, \rho, Y) \, dt.
\end{multline}

{\bf Step 2.}
Let $E_{+}(x, \rho)$ be the solution of the equation
\begin{multline} \label{inteqJost}
  E_{+}(x, \rho) = \exp(i \rho x) P_{-}(x) - \frac{1}{2} \int_x^\iy 
  (\exp(i \rho (x - t)) P_{-}(x) P_{+}^*(t) \\ - \exp(i \rho (t - x)) P_{+}^*(x) P_{-}^*(t)) (i \rho I - Q_1(t))^{-1}
F(t, \rho, E_{+}) \, dt.
\end{multline} 
Since this is a particular case of \eqref{inteq}, the function $E_{+}(x, \rho)$ satisfy $\ell_{\rho}(E_{+}) = 0$.
We transform \eqref{inteqJost} by means of the replacement $E_{+}(x, \rho) = \exp(i \rho x) Z(x, \rho)$ to the equation
\begin{multline} \label{inteqZ}
   Z(x, \rho) = P_{-}(x) - \frac{1}{2}\int_x^\iy \bigl[P_{-}(x) P_{+}^*(t) - \exp(2 i \rho(t - x)) P_{+}(x) P_{-}^*(t)\bigr]
   (i \rho I - Q_1(t))^{-1} \\ \bigl[Q_1'(t) (i \rho I - Q_1(t))^{-1} (Z'(t, \rho) + i \rho Z(t, \rho)) - 
   (Q_1^2(t) + Q_0(t)) Z(t, \rho)\bigr] \, dt.
\end{multline}
\begin{comment}
where
$$
   G(t, \rho, Z) := (i \rho I - Q_1(t))^{-1} \\ \bigl[Q_1'(t) (i \rho I - Q_1(t))^{-1} (Z'(t, \rho) + i \rho Z(t, \rho)) - 
   (Q_1^2(t) + Q_0(t)) Z(t, \rho)\bigr].
$$
\end{comment}
The method of successive approximations gives
\begin{equation} \label{Z0}
 	Z_0(x, \rho) = P_{-}(x), \quad Z_0'(x) = -Q_1(x) P_{-}(x),
\end{equation}
$$
 	Z_{k + 1}(x, \rho) = \int_x^{\iy} F_1(x, t, \rho) Z_k(t, \rho) \, dt + \int_x^{\iy} F_2(x, t, \rho) Z'_k(t, \rho) \, dt,
$$
$$
 	Z'_{k + 1}(x, \rho) = \int_x^{\iy} F_3(x, t, \rho) Z_k(t, \rho) \, dt + \int_x^{\iy} F_4(x, t, \rho) Z'_k(t, \rho) \, dt, 
$$
\begin{comment}
$$
 	Z_{k+1}(x, \rho) = - \frac{1}{2}\int_x^\iy \bigl[P_{-}(x) P_{+}^*(t) - \exp(2 i \rho(t - x)) P_{+}(x) P_{-}^*(t)\bigr] G(t, \rho, Z_k) \, dt,
$$
\begin{multline*}
 	Z'_{k + 1}(x, \rho) = \frac{1}{2} \int_x^\iy \bigl[Q_1(x) (P_{-}(x) P_{+}^*(t) + \exp(2 i \rho (t - x)) P_{+}(x) P_{-}^*(t)) 
 	\\ - 2 i \rho \exp(2 i \rho (t - x))P_{+}(x) P^*_{-}(t)\bigr] G(x, t, Z_{k}) \, dt,
\end{multline*}
\end{comment}
\begin{equation} \label{seriesZ}
    Z(x, \rho) = \sum_{k = 0}^{\iy} Z_k(x, \rho),
\end{equation}
where
\begin{gather*}
 	F_1(x, t, \rho) = -\frac{1}{2}\bigl[P_{-}(x) P_{+}^*(t) - \exp(2 i \rho(t - x)) P_{+}(x) P_{-}^*(t)\bigr] 
   (i \rho I - Q_1(t))^{-1} \\ \cdot \bigl[Q_1'(t) (i \rho I - Q_1(t))^{-1} i \rho - Q_1^2(t) - Q_0(t) \bigr], \\ 
 	F_2(x, t, \rho) = -\frac{1}{2}\bigl[P_{-}(x) P_{+}^*(t) - \exp(2 i \rho(t - x)) P_{+}(x) P_{-}^*(t)\bigr] 
   (i \rho I - Q_1(t))^{-1} Q_1'(t) (i \rho I - Q_1(t))^{-1}, \\
 	F_3(x, t, \rho) = \frac{1}{2} \bigl[Q_1(x) (P_{-}(x) P_{+}^*(t) + \exp(2 i \rho (t - x)) P_{+}(x) P_{-}^*(t)) 
 	- 2 i \rho \exp(2 i \rho (t - x))P_{+}(x) P^*_{-}(t)\bigr] \\ \cdot (i \rho I - Q_1(t))^{-1} \bigl[Q_1'(t) (i \rho I - Q_1(t))^{-1} i \rho - Q_1^2(t) - Q_0(t) \bigr],\\
 	F_4(x, t,\rho) = \frac{1}{2} \bigl[Q_1(x) (P_{-}(x) P_{+}^*(t) + \exp(2 i \rho (t - x)) P_{+}(x) P_{-}^*(t)) 
 	- 2 i \rho \exp(2 i \rho (t - x))P_{+}(x) P^*_{-}(t)\bigr] \\ \cdot (i \rho I - Q_1(t))^{-1} 
 	(i \rho I - Q_1(t))^{-1} Q_1'(t) (i \rho I - Q_1(t))^{-1}.
\end{gather*}

Since $Q_1(t)$ is bounded, for each $\rho_* > 0$ there exists $x_* \ge 0$ such that 
\begin{equation} \label{Q*}
	\| Q_1(t) \| \le \rho_* / 2, \quad t \ge x_*.
\end{equation}
Consequently, 
$$
	\| (i \rho I - Q_1(t))^{-1} \| \le \frac{2}{|\rho|}, \quad |\rho| \ge |\rho^*|.
$$
We introduce auxiliary functions
$$
 	\mathcal{F}(x) := \exp\left( \int_0^x \| Q_1(t) \| \, dt \right), 
$$
$$
 	\mathcal{G}(x) := 3 \mathcal{F}(x) (2 \| Q_1'(x) \| + \| Q_1^2(x) \| + \| Q_0(x) \| ),
$$
$$
 	\mathcal{H}(x) := \int_x^{\iy} \mathcal{G}(t) \mathcal{F}(t) \, dt.
$$
Note that $\mathcal{F}(x)$ is continuous and bounded, and by Lemma~\ref{lem:P}, 
$$
	\| P_{\pm}(x)\| \le \mathcal{F}(x), \quad \| P^*_{\pm}(t) \| \le \mathcal{F}(t). 
$$
Since the pencil $L$ belongs to $\mathcal{V}$, the function $\mathcal{G}(x)$ belongs to $L(0, \infty)$
and $\mathcal{H}(x)$ is continuous and bounded.

Now one can easily show that
$$
 	\begin{array}{ll}
	\| F_1(x, t, \rho) \| \le |\rho|^{-1} \mathcal{F}(x) \mathcal{G}(t), &
	\| F_2(x, t, \rho) \| \le |\rho|^{-2} \mathcal{F}(x) \mathcal{G}(t), \\
	\| F_3(x, t, \rho) \| \le             \mathcal{F}(x) \mathcal{G}(t), &
	\| F_4(x, t, \rho) \| \le |\rho|^{-1} \mathcal{F}(x) \mathcal{G}(t),	
 	\end{array}
$$
for $|\rho| \ge \rho_*$, $x \ge x_*$. Let us prove by induction the following estimates
\begin{equation} \label{estZk}
  	\| Z_k^{(\nu)}(x, \rho) \| \le \frac{2^k \mathcal{F}(x) \mathcal{H}^k(x)}{|\rho|^{k - \nu} k!}, \quad \nu = 0, 1, \, |\rho| \ge \rho_*, \, x \ge x_*.
\end{equation}
For $k = 0$ they are obvious. If they are already proved for some fixed $k$, for $k + 1$ we have
\begin{multline*}
 	\| Z_{k + 1}(x, \rho) \| \le \int_x^{\iy} \| F_1(x, t, \rho) \| \cdot \| Z_k(t) \| \, dt + 
 	\int_x^{\iy} \| F_2(x, t, \rho) \| \cdot \| Z_k'(t) \| \, dt \\
 	\le \frac{2^{k + 1} \mathcal{F}(x)}{|\rho|^{k + 1} k!} \int_x^{\iy} \mathcal{F}(t)\mathcal{G}(t) \mathcal{H}^k(t) \, dt = 
 	\frac{2^{k + 1} \mathcal{F}(x) \mathcal{H}^{k + 1}(x)}{|\rho|^{k + 1} (k + 1)!}.
\end{multline*}
The proof for $Z'_{k + 1}(x, \rho)$ is similar.

It follows from \eqref{estZk}, that the series \eqref{seriesZ} converges absolutely 
and uniformly for $|\rho| \ge \rho_*$, $x \ge x_*$, and the matrix function $Z(x, \rho)$ is 
the unique solution for the integral equation \eqref{inteqZ}. This function is continuous in $x$ 
and $\rho$ and analytic in $\rho$ for $|\rho| \ge \rho_*$, $x \ge x_*$.

For each fixed $x_* = \al > 0$ one can choose $\rho_{\al} = \rho_* > 0$ such that
\eqref{Q*} is satisfied. Then  
we construct $E_{+}(x, \rho)$ as the solution of \eqref{inteqJost}
for $x \ge \al$ and as the solution of the Cauchy problem for the equation $\ell_{\rho}(Y) = 0$, $x < \al$,
with initial conditions generated by $E_{+}(\al, \rho)$. Consequently, $E_{+}(x, \rho)$ satisfies $(i_1)$
and $(i_2)$.

Moreover, using \eqref{Z0}, \eqref{seriesZ} and \eqref{estZk}, we conclude
$$
 	\| Z(x, \rho) - P_{-}(x) \| \le (2 \mathcal{F}(x) \mathcal{H}(x) / |\rho|) \exp(2 \mathcal{H}(x) / |\rho|), 
$$
$$
  	\| Z'(x, \rho) + Q_1(x) P_{-}(x) \| \le (2 \mathcal{F}(x) \mathcal{H}(x)) \exp(2 \mathcal{H}(x) / |\rho|). 	
$$
The functions $\mathcal{F}(x)$ and $\mathcal{H}(x)$ are bounded and tend to zero as $x \to \infty$.
Therefore for each fixed $\rho$, $|\rho| \ge \rho_*$,
$$
 	Z(x, \rho) = P_{-}(x) + o(1), \quad Z'(x, \rho) = -Q_1(x) P_{-}(x) + o(1), \quad x \to \iy,
$$
that yields $(i_4)$, and for each fixed $x \ge 0$
$$
  	Z(x, \rho) = P_{-}(x) + O(|\rho|^{-1}), \quad Z'(x, \rho) = O(1), \quad |\rho| \to \iy.
$$
Substituting these asymptotics into \eqref{inteqZ}, we get
$$
 	Z(x, \rho) = P_{-}(x) - \frac{1}{2 i \rho} J_1(x)+ 
 	\frac{1}{2 i \rho}J_2(x, \rho) + O(|\rho|^{-2}), 
$$
where
$$
	J_1(x) := P_{-}(x) \int_x^{\iy} P_{+}^*(t) (Q_1'(t) - Q_1^2(t) - Q_0(t)) P_{-}(t) \, dt, 
$$
$$
 	J_2(x, \rho) := P_{+}(x) \int_x^{\infty} \exp(2 i \rho(t - x)) P_{-}^*(t) (Q_1'(t) - Q_1^2(t) - Q_0(t)) P_{-}(t) \, dt.
$$                          
Using \eqref{cauchyT}, \eqref{cauchyP*} and Lemma~\ref{lem:P}, we get 
\begin{multline*}
	J_1(x) = 2 i P_{-}(x) \int_x^{\infty} P_{+}^*(t) (T_{-}'(t) + Q_1(t) T_{-}(t)) \, dt =
	2 i P_{-}(x) \int_x^{\infty} (P_{+}^*(t) T_{-}'(t) + {P_{+}^*}'(t) T_{-}(t)) \, dt \\ =
	- 2 i P_{-}(x) P_{+}^*(x) T_{-}(x) = -2 i T_{-}(x).
\end{multline*}
Since 
$$
	P_{-}^*(t) (Q_1'(t) - Q_1^2(t) - Q_0(t)) \in L_2((0, \iy); \mathbb{C}^{m \times m}),
$$
we can apply 
\cite{FY01}[Lemma 2.1.1] to the entries of $J_2(x, \rho)$ and get
$$
 	\lim_{\rho \to \iy} \sup_{x \ge 0} \| J_2(x, \rho) \| = 0.
$$
Finally,
$$
 	Z(x, \rho) = P_{-}(x) + \frac{T_{-}(x)}{\rho} + o(\rho^{-1}), \quad x \ge \al, \quad |\rho| \to \infty,
$$
and now $(i_3)$ is obvious for $E_{+}^{(\nu)}(x, \rho)$.
 
{\bf Step 3.}
Let $\rho \in \Pi_{+}$, $\al \ge 0$ be fixed and $E_{-}(x, \rho)$ be the solution of the integral equation
\begin{multline*}
	E_{-}(x, \rho) = \exp(- i \rho x) P_{+}(x) + \frac{1}{2} \int_x^{\iy} \exp(i \rho (t - x)) P_{+}(x) P_{-}^*(t) 
	(i \rho I - Q_1(t))^{-1} F(t, \rho, E_{-}) \, dt \\
	 + \frac{1}{2} \int_{\al}^x \exp(i \rho(x - t)) P_{-}(x) P_{+}^*(t)
	(i \rho I - Q_1(t))^{-1} F(t, \rho, E_{-}) \, dt.
\end{multline*}
It is easy to check that $\ell_{\rho}(E_{-}) = 0$. Substituting $E_{-}(x, \rho) = \exp(- i \rho x) \xi(x, \rho)$,
we arrive at the equation
\begin{equation} \label{inteqxi}
\xi(x, \rho) = P_{+}(x) + \frac{1}{2} \int_x^{\iy} P_{+}(x) P_{-}^*(t) G(t, \rho, \xi) \, dt + 
\frac{1}{2} \int_a^x \exp(2 i \rho(x - t)) P_{-}(x) P_{+}^*(t) G(t, \rho, \xi) \, dt,
\end{equation}
where
$$
 	G(t, \rho, \xi) = (i \rho I - Q_1(t))^{-1} \bigl[ Q_1'(t) (i \rho I - Q_1(t))^{-1}
 	(\xi'(t, \rho) - i \rho \xi(t, \rho)) - (Q_1^2(t) + Q_0(t)) \xi(t, \rho)
 	\bigr].
$$
The method of successive approximations gives
$$
 	\xi_0(x, \rho) = P_{+}(x), \quad \xi_0'(x, \rho) = Q_1(x) P_{+}(x),
$$
\begin{multline*}
 	\xi_{k + 1}^{(\nu)}(x, \rho) = \int_x^{\iy} G_1^{(\nu)}(x, t, \rho) \xi_k(t, \rho) \, dt + 
						   \int_x^{\iy} G_2^{(\nu)}(x, t, \rho) \xi_k'(t, \rho) \, dt \\ +
						   \int_{\al}^x G_3^{(\nu)}(x, t, \rho) \xi_k(t, \rho) \, dt + 
						   \int_{\al}^x G_4^{(\nu)}(x, t, \rho) \xi_k'(t, \rho) \, dt, \quad \nu = 0, 1,
\end{multline*}
$$
 	\xi(x, \rho) = \sum_{k = 0}^{\iy} \xi_{k}(x, \rho).
$$
Here the functions $G_s(x, t, \rho)$, $s = \overline{1, 4}$, can be written in explicit form, and they are differentiated with respect to $x$.
Analogously to the Step 2, we obtain the following estimates
\begin{equation} \label{estGs}
	\begin{array}{l}
 	\| G_1^{(\nu)}(x, t, \rho) \|, \, \| G_3^{(\nu)}(x, t, \rho) \| \le |\rho|^{\nu - 1} \mathcal{F}(x) \mathcal{G}(t),
 	\\
 	\| G_2^{(\nu)}(x, t, \rho) \|, \, \| G_4^{(\nu)}(x, t, \rho) \| \le |\rho|^{\nu - 2} \mathcal{F}(x) \mathcal{G}(t),
 	\end{array}
\end{equation}
for $\nu = 0, 1$, $x \ge \al$, $\rho \ge \rho_*$, where $\rho^*$ is such that \eqref{Q*} is satisfied ($x_* = \al$).
By induction, we prove that
$$
 	\| \xi_k^{(\nu)}(x, \rho) \| \le \frac{2^k \mathcal{F}(x) \mathcal{H}^k(\al) }{|\rho|^{k - \nu}}, \quad \nu = 0, 1.
$$
Choose $\rho_{\al} > 0$ such that $\rho_{\al} \ge 4 \mathcal{H}(\al)$ and $\rho_{\al} \ge \rho_*$. Then 
for $|\rho| \ge \rho_{\al} $  and $x \ge \al$ the equation \eqref{inteqxi} is uniquely solvable,
and now $(i_1)$, $(i_2)$ for $E_{-}(x, \rho)$ can be proved similarly to $E_{+}(x, \rho)$.
Moreover,
\begin{equation} \label{estxi}
 	\| \xi^{(\nu)}(x, \rho) \| \le 2 |\rho|^{\nu} \mathcal{F}(x), \quad \nu = 0, 1,
\end{equation}
$$ 	
 	\| \xi(x, \rho) - P_{+}(x) \| \le 4 H(\al) / |\rho|, 
 	\quad \| \xi'(x, \rho) - Q_1(x) P_{+}(x) \| \le 4 H(\al).
$$
Consequently, for each fixed $x \ge \al$,
\begin{equation} \label{asymptxi}
 	\xi(x, \rho) = P_{+}(x) + O(\rho^{-1}), \quad \xi'(x, \rho) = O(1), \quad |\rho| \to \infty.
\end{equation}
If $\rho \in \Pi_{+}$ is fixed, there exists $\al \ge 0$ such that $|\rho| \ge 4 \mathcal{H}(\al)$
and \eqref{Q*} is satisfied. According to \eqref{inteqxi},
\begin{multline*}
 	\xi(x, \rho) = P_{+}(x) + \int_x^{\infty} G_1(x, t, \rho) \xi(t, \rho) \, dt + 
 	\int_x^{\infty} G_2(x, t, \rho) \xi'(t, \rho) \, dt \\ + \int_{\al}^{x} G_3(x, t, \rho) \xi(t, \rho) \, dt 
 	+ \int_{\al}^x G_4(x, t, \rho) \xi'(t, \rho) \, dt, 	
\end{multline*}
then using \eqref{estGs} and \eqref{estxi}, we derive
\begin{multline*}
 	\| \xi(x, \rho) - P_{+}(x) \| \le 4 \int_x^{\infty} \frac{\mathcal{F}(x) \mathcal{G}(t)}{|\rho|} \mathcal{F}(t) \, dt
 	+ 4 \int_{\al}^x \exp(2 i \rho (x - t)) \frac{\mathcal{F}(x) \mathcal{G}(t)}{|\rho|} \mathcal{F}(t) \, dt \\ \le
 	4 \int_x^{\infty} \frac{\mathcal{F}(x) \mathcal{G}(t)}{|\rho|} \mathcal{F}(t) \, dt
 	 + 4 \int_a^{x / 2} \exp(2 i \rho (x - t)) \frac{\mathcal{F}(x) \mathcal{G}(t)}{|\rho|} \mathcal{F}(t) \, dt
 	+ 4 \int_{x/2}^x \frac{\mathcal{F}(x) \mathcal{G}(t)}{|\rho|} \mathcal{F}(t) \, dt.
\end{multline*}
Finally,
$$
 	\| \xi(x, \rho) - P_{+}(x) \| \le 4 \mathcal{F}(x) \mathcal{H}(x/2) / |\rho| + 4 \exp(-|\mbox{Im} \, \rho|x) \mathcal{H}(a) \mathcal{F}(x)/ |\rho|.
$$
As $x \to \iy$, the first summand tends to zero since $\mathcal{H}(x/2)$ tends to zero and $\mathcal{F}(x)$ is bounded. The second summand 
tends to zero since $\mbox{Im} \, \rho \ne 0$. The analogous estimates can be obtained for $\xi'(x, \rho)$.
Therefore,
$$
 	\xi(x, \rho) = P_{+}(x) + o(1), \quad \xi'(x, \rho) = Q_1(x) P_{+}(x) + o(1), \quad x \to \infty,
$$                                                                   
so $(i_3)$ is proved for $E^{(\nu)}_{-}(x, \rho)$.

In order to prove $(i_4)$, we substitute \eqref{asymptxi} into \eqref{inteqxi}:
$$
	\xi(x, \rho) = P_{+}(x) - \frac{1}{2 i \rho} J_1(x) - \frac{1}{2 i \rho} J_2(\al, x, \rho) + O(\rho^{-2}),
$$                   
$$
 	J_1(x) := P_{+}(x) \int_x^{\infty} P_{-}^*(t) (Q_1'(t) + Q_1^2(t) + Q_0(t)) P_{+}(t) \, dt,
$$
$$
 	J_2(\al, x, \rho) := P_{-}(x) \int_{\al}^x \exp(2 i \rho (x - t)) P_{+}^*(t) (Q_1'(t) + Q_1^2(t) + Q_0(t)) P_{+}(t) \, dt.
$$                      
It is easy to check that $J_1(x) = -2 i T_{+}(x)$ ($T_{+}(x)$ was defined in \eqref{cauchyT}). Applying 
\cite{FY01}[Lemma 2.1.1] to the entries of $J_2(\al, x, \rho)$, for each fixed $\al$, we get
$$
 	\lim_{|\rho| \to \infty} \sup_{x \ge \al} \| J_2(\al, x, \rho) \| = 0.
$$
Consequently,
\begin{equation} \label{asymptxi2}
 	\xi(x, \rho) = P_{+}(x) + \frac{T_{+}(x)}{\rho} + o(\rho^{-1}), \quad |\rho| \to \infty.
\end{equation}
Using \eqref{asymptxi} and \eqref{asymptxi2}, we arrive at $(i_3)$ for $E^{(\nu)}_{-}(x, \rho)$.
                                                                              
{\bf Step 4.} Let us prove that the columns of $E_{+}(x, \rho)$ and $E_{-}(x, \rho)$ are linearly independent.
Fix $\rho \ne 0$, $\mbox{Im}\, \rho \ge 0$ and suppose this does not hold, i.e. there exist vectors $A(\rho)$ and $B(\rho)$, not both equal zero,
such that
$$
 	E^{(\nu)}_{-}(x, \rho) A(\rho) + E^{(\nu)}_{+}(x, \rho) B(\rho) = 0, \quad x \ge 0, \quad \nu = 0, 1.
$$
Substituting the asymptotics from $(i_4)$, we get
$$
	\begin{array}{l}
	\bigl[P_{+}(x) + o(1)\bigr] A(\rho) + \exp(2 i \rho x) \bigl[P_{-}(x) + o(1)\bigr] B(\rho) = 0, \\
	\bigl[- (i \rho I - Q_1(x)) P_{+}(x) + o(1) \bigr] A(x) + \exp(2 i \rho x) 
	\bigl[(i \rho I - Q_1(x))P_{-}(x) + o(1)\bigr] B(\rho) = 0, 
	\end{array}	
$$ 
as $x \to \infty$.
Since $Q_1(x)$ tends to zero as $x \to \infty$ and by Lemma~\ref{lem:P} the functions $P_{\pm}(x)$ are bounded,
we obtain 
$$
 	P_{+}(x) A(\rho) = o(1), \quad P_{-}(x) B(\rho) = o(1), \quad x \to \infty.
$$
Since the solutions of the Cauchy problems
$$
	Y_{\pm}(x) = \pm Q_1(x) Y_{\pm}(x), \quad Y_{\pm}(\infty) = 0, 	
$$
are unique: $Y_{\pm}(x) \equiv 0$, we conclude that $A(\rho) = B(\rho) = 0$. The contradiction finishes the proof of the theorem.
\end{proof}

\medskip

%{\bf Acknowledgement.} This research was supported in part by DAAD and
%Grants 10-01-00099 and 10-01-92001-NSC of Russian Foundation for Basic Research
%and Taiwan National Science Council.

%\medskip

\vspace{1cm}

Natalia Bondarenko

Department of Mathematics

Saratov State University

Astrakhanskaya 83, Saratov 410026, Russia

{\it bondarenkonp@info.sgu.ru}

\smallskip

Gerhard Freiling

Department of Mathematics 

University Duisburg-Essen 

Forsthausweg 2, 47057 Duisburg, Germany 

{\it freiling@math.uni-duisburg.de}

\end{document}